\numberwithin{equation}{section}
\definecolor{myred}{rgb}{0.75,0,0}
\definecolor{mygreen}{rgb}{0,0.5,0}
\definecolor{myblue}{rgb}{0,0,0.65}
    \def\CM{{\mathbb{C}}}
    \def\QM{{\mathbb{Q}}}
    \def\ZM{{\mathbb{Z}}}
    \def\AC{{\mathcal{A}}}
    \def\BC{{\mathcal{B}}}
    \def\CC{{\mathcal{C}}}
    \def\DC{{\mathcal{D}}}
    \def\HC{{\mathcal{H}}}
    \def\NC{{\mathcal{N}}}
    \def\OC{{\mathcal{O}}}
    \def\SC{{\mathcal{S}}}
\def\a{\alpha}
\def\b{\beta}
\def\g{\gamma}
\def\G{\Gamma}
\def\d{\delta}
\def\e{\varepsilon}
\def\l{\lambda}
\def\L{\Lambda}
\def\th{\theta}
\def\z{\zeta}
\newcommand{\nc}{\newcommand} \newcommand{\renc}{\renewcommand}
\newcommand{\rdots}{\mathinner{ \mkern1mu\raise1pt\hbox{.}
    \mkern2mu\raise4pt\hbox{.}
    \mkern2mu\raise7pt\vbox{\kern7pt\hbox{.}}\mkern1mu}}
\def\un{\underline}
\def\p{{}^p}
\def\to{\rightarrow}
\def\longto{\longrightarrow}
\def\onto{\twoheadrightarrow}
\nc{\triright}{\stackrel{[1]}{\to}}
\nc{\longtriright}{\stackrel{[1]}{\longto}}
\nc{\Hb}{H^\bullet}
\nc{\Br}{\mathcal{B}}
\nc{\HotRR}{{}_R\mathcal{K}_R}
\nc{\HotR}{\mathcal{K}_R}
\nc{\excise}[1]{}
\nc{\defect}{\text{df}}
\nc{\h}[1]{\underline{H}_{#1}}
\nc{\Ga}{\mathbb{G}_a} % additive group
\nc{\Gm}{\mathbb{G}_m} % multiplicative group
\nc{\Perv}{{\mathbf{P}}}
\nc{\IH}{{\mathrm{IH}}}
\nc{\ic}{\mathbf{IC}}
\nc{\gl}{{\mathfrak{gl}}}
\renc{\sl}{{\mathfrak{sl}}}
\renc{\sp}{{\mathfrak{sp}}}
\renc{\Im}{\textrm{Im}}
\nc{\HBM}{H^{BM}}
 \DeclareMathOperator{\Hom}{Hom}
 \DeclareMathOperator{\ch}{ch}
\DeclareMathOperator{\End}{End} %\DeclareMathOperator{\BS}{BS}
\DeclareMathOperator{\Rep}{Rep}
\DeclareMathOperator{\id}{id}
\newtheorem{thm}{Theorem}[section]
\newtheorem{lem}[thm]{Lemma}
\newtheorem{prop}[thm]{Proposition}
\newtheorem{cor}[thm]{Corollary}
\theoremstyle{definition}
\newtheorem{ex}[thm]{Example}
\theoremstyle{remark}
\newtheorem{remark}[thm]{Remark}
\nc{\simto}{\stackrel{\sim}{\to}}
\DeclareMathOperator{\st}{st}
\newcommand{\Z}{\mathbb{Z}}
\renewcommand{\>}{\right\rangle}
\newcommand{\dd}{\partial}
\newcommand{\mattwos}[4]
{\bigl( \begin{smallmatrix}
                        #1  & #2   \\
                        #3 &  #4
\end{smallmatrix} \bigr)
}
\newcommand{\mattwo}[4]
{\left(\begin{array}{cc}
                        #1  & #2   \\
                        #3 &  #4
                          \end{array}\right) }
\newcommand{\SL}{\operatorname{SL}}
\newcommand{\figref}[1]{\hyperref[#1]{Figure \ref{#1}}}
\newcommand{\lemref}[1]{\hyperref[#1]{Lemma \ref{#1}}}
\newcommand{\thmref}[1]{\hyperref[#1]{Theorem \ref{#1}}}
\newcommand{\conjref}[1]{\hyperref[#1]{Conjecture \ref{#1}}}
\newcommand{\propref}[1]{\hyperref[#1]{Proposition \ref{#1}}}
\newcommand{\corref}[1]{\hyperref[#1]{Corollary \ref{#1}}}
\newcommand{\defref}[1]{\hyperref[#1]{Definition \ref{#1}}}
\newcommand{\rmkref}[1]{\hyperref[#1]{Remark \ref{#1}}}
\newcommand{\qref}[1]{\hyperref[#1]{Question \ref{#1}}}
\newcommand{\secref}[1]{\hyperref[#1]{\S\ref{#1}}}
\newcommand{\appref}[1]{\hyperref[#1]{Appendix \ref{#1}}}
\newcommand{\fc}{\frak{c}}
\nc{\St}{\mathrm{st}}
\nc{\df}{\mathrm{df}}
\title{Schubert calculus and torsion explosion}
\author[Geordie Williamson]{Geordie Williamson \\
with a joint appendix with \\
Alex Kontorovich and Peter J. McNamara }
\address{Rutgers University, New Brunswick, NJ}
\email{alex.kontorovich@rutgers.edu}
\address{University of Queensland, Brisbane, QLD, Australia.}
\email{p.mcnamara@uq.edu.au}
\address{Max-Planck-Institut f\"ur Mathematik,
Vivatsgasse 7, 53111,  Bonn, Germany.}
\email{geordie@mpim-bonn.mpg.de}
\begin{document}

\subjclass[2010]{Primary 20C20, 20G05; % 20C30;
Secondary 14N15, 14M15.}

\begin{abstract} We observe that certain numbers occurring in Schubert
  calculus for $\SL_n$ also occur as entries in intersection
  forms controlling decompositions of Soergel bimodules in higher rank. These numbers grow exponentially. This observation
  gives many counter-examples to the expected bounds in Lusztig's conjecture on
  the characters of simple rational modules for $\SL_n$ over fields of
  positive characteristic. Our examples also give
  counter-examples to the James conjecture on decomposition numbers
  for symmetric groups.
\end{abstract}

\maketitle

\begin{center}
  \emph{Dedicated to Meg and Gong.}
\end{center}

\section{Introduction}

Let $G$ be a connected algebraic group over an algebraically closed
field. A basic question in representation theory asks for the
dimensions and characters of the simple rational
$G$-modules. Structure theory of algebraic groups allows one to assume
that $G$ is reductive. If the ground field is of characteristic zero,
then the theory runs parallel to the well-understood theory for compact Lie groups. In
positive characteristic $p$, Steinberg's
tensor product theorem, the linkage principle and Jantzen's
translation principle reduce this to a question about finitely many
modules which occur in the same block as the trivial module (the
``principal block''). For these modules Lusztig has proposed a
conjecture if $p > h$,
 where $h$ denotes the Coxeter number of the
root system of $G$ \cite{L}.\footnote{Lusztig first proposed his conjecture
  under a restriction equivalent to $p \ge 2h-3$ (see \cite[\S 4]{JaCF} and \cite[\S 8.22]{Ja} for a discussion). 
Kato \cite[\S 5]{Kato} proved that if Lusztig's conjecture holds for
restricted weights then it holds for all weights in
the Jantzen region (Lusztig's original formulation). Since Kato's
work $p > h$ has been widely regarded as a realistic bound \cite[\S
  4]{JaCF}.}
%The statement of the conjecture
 % for $p \ge h$ seems to have first been made by Kato \cite{Kato}, who
 % also showed that Lusztig's conjecture is compatible with
 % Steinberg's tensor product theorem.}
He conjectures an expression for the characters of the simple
modules in terms of affine Kazhdan-Lusztig polynomials and the (known)
characters of standard modules.

Lusztig's conjecture has been shown to hold for $p$ large
(without an explicit bound) thanks to work of Andersen, Jantzen and Soergel
\cite{AJS}, Kashiwara and Tanisaki \cite{KT1,KT2}, Kazhdan and
Lusztig \cite{KLaffine,KL2,KL3} and Lusztig \cite{LuM}. Alternative
proofs for large $p$ have been given by Arkhipov, Bezrukavnikov and 
Ginzburg \cite{ABG}, Bezrukavnikov, Mirkovic and Rumynin 
\cite{BMR, BM} (in the broader context of Lie algebra representations)
and Fiebig \cite{F}. Fiebig also gives an explicit
enormous\footnote{e.g. at least of the order of $p \gg n^{n^2}$ for
  $\SL_{n+1}$.}  bound \cite{F2}, and
establishes the multiplicity one case \cite{F3}. 
For any fixed $G$ and ``reasonable'' $p$ very
little is known: the case of rank 2 groups can be deduced from
Jantzen's sum formula, and intensive computational efforts have
checked the conjecture for small $p$ and certain groups, all of
rank $\le 5$. There is no conjecture as to what happens if
$p$ is smaller than the Coxeter number.

In \cite{Soe} Soergel introduced a subquotient of the category of
rational representations, dubbed the ``subquotient around the
Steinberg weight'', 
as a toy model for the study of Lusztig's conjecture. Whilst
the full version of Lusztig's conjecture is based on the combinatorics of alcoves
and the affine Weyl group, the subquotient around the Steinberg weight
is controlled by the finite Weyl group, and behaves like a modular
version of category $\OC$.  Lusztig's conjecture implies that the
multiplicities in the subquotient around the Steinberg weight are given
by finite Kazhdan-Lusztig polynomials.  Thus Lusztig's conjecture
implies that ``the subquotient around the Steinberg weight satisfies the
Kazhdan-Lusztig conjecture''.

In \cite{Soe} Soergel goes on to explain how the subquotient around the Steinberg
weight is controlled by Soergel bimodules. This allows him to relate
this category to the category of constructible sheaves on the Langlands dual flag
variety, with coefficients in the field of definition of $G$. Using
Soergel's results and the theory of parity sheaves \cite{JMW2}, one
can see that a part of Lusztig's conjecture for $p > h$ is equivalent to absence of $p$ torsion in the stalks and
costalks of integral intersection cohomology complexes of Schubert
varieties in the flag variety. It has been known since the birth of
the theory of intersection cohomology that $2$-torsion occurs in type $B_2$, and $2$- and $3$-torsion occurs in type $G_2$. For over a decade no
other examples of torsion were known. In
2002 Braden discovered 2-torsion in the stalks of integral
intersection cohomology complexes on flag varieties of types $D_4$ and
$A_7$ (see Braden's appendix to \cite{W}). In 2011 Polo discovered 3-torsion in the
cohomology of the flag variety of type $E_6$ and 
$n$-torsion in a flag variety of type $A_{4n-1}$. Polo's
(as yet unpublished) results are significant, as they
emphasise how little we understand in high rank (see the final lines of \cite{WO}).

In general these topological calculations appear extremely
difficult. Recently Elias and
the author found a presentation for the monoidal category of Soergel bimodules
by generators and relations \cite{EW}, building on the work of
Libedinsky \cite{LibRA}, Elias-Khovanov \cite{EKh} and Elias \cite{EDC}.
One of the applications of this theory is
that one can decide whether a given intersection cohomology complex
has $p$-torsion in its stalks or costalks (the bridge between intersection
cohomology and Soergel bimodules is provided by the theory of parity
sheaves).\footnote{One can also perform this calculation using the theory of moment
  graphs \cite{FW}. However the computations using generators and
  relations are generally much simpler.}
 The basic idea is as follows: given any pair $(\un{w},x)$,
where $x, w \in W$ and $\un{w}$ is a reduced expression for $w \in W$, 
one has an ``intersection form'', an integral matrix.  The stalks
of the intersection cohomology complex corresponding to $w$ are free of $p$-torsion
if no elementary divisors of the intersection forms
associated to all elements $x \le w$ are divisible by $p$.
In principle, this gives an algorithm to decide whether
Lusztig's conjecture is correct around the Steinberg
weight.\footnote{One can extend this to the full version of Lusztig's
  conjecture by using a certain subset of the affine Weyl group,
  thanks to the work of Fiebig
  \cite{F}. Although it seems likely that the converse holds, at present one only knows one implication: the
  absence of $p > h$ torsion implies the truth of Lusztig's
  conjecture in characteristic $p$.} 
This algorithm (in a slightly
different form) was discovered independently by Libedinsky
\cite{LLC}.

The generators and relations approach certainly makes calculations
easier. However this
approach still has its difficulties: the diagrammatic calculations remain
extremely subtle, and the ``light leaves'' basis in which the intersection form is
calculated depends on additional choices which seem difficult to make
canonical. Recent progress in this direction has been made by Xuhua He
and the author \cite{HW}, who discovered that certain entries in the
intersection form (which in some important examples are all entries)
are canonical and may be evaluated in terms of expressions
in the nil Hecke ring.

The main result of this paper may be seen as an example of this
phenomenon. We observe that one may embed
certain structure constants of Schubert calculus for $\SL_n$ as the
entries of $1 \times 1$ intersection forms associated to pairs
$(\un{w}, x)$ in (much) higher rank groups. In this way one can produce
many new examples of torsion which grow exponentially in the rank.
For example, using Schubert calculus for the flag variety of 
$\SL_4$ we observe that the Fibonacci number $F_{i+1}$ occurs
as torsion in $\SL_{3i+5}$. We deduce that there is no linear function
$f(n)$ of $n$ such that Lusztig's conjecture holds for all $p \ge
f(n)$ for $\SL_n$. In the appendix (by Kontorovich, McNamara and the author) we
apply  recent results of Bourgain-Kontorovich \cite{BourgainKontorovich2014} in number
theory to deduce that the torsion in $\SL_n$ grows exponentially in $n$.

Finally, there is a related conjecture due to James \cite{James}
concerning the simple representations of the symmetric group in characteristic
$p$. When combined with known results about the decomposition numbers 
for Hecke algebras at roots of unity, the James conjecture would yield the
decomposition numbers for symmetric groups $S_n$ in characteristic $p
> \sqrt{n}$. In the final section of the paper we explain why our counter-examples to Lusztig's conjecture for $SL_N$ with
$p > {N \choose 2}$ imply that the James conjecture is
incorrect for $S_{p{N \choose 2}}$. (Parts of this section were
explained to me by Joe Chuang.)

\subsection{Main result} \label{sec:main}

Let $R = \ZM[x_1, x_2, \dots, x_n]$ be a polynomial ring in $n$
variables. We
regard $R$ as a graded ring with $\deg x_i =2$ (we double degrees for
reasons coming from Soergel bimodules). Let
$W = S_n$ denote the symmetric group on $n$ letters viewed as a
Coxeter group with simple reflections $S$ consisting of the simple transpositions.
Then $W$ acts by
permutation of variables on $R$. Let $s_1,
\dots, s_{n-1}$ denote the simple transpositions of $S_n$ and let
$\ell$ denote the corresponding length function. Let $\partial_i$ denote the $i^{th}$ divided
difference operator:
\[
\partial_i(f) = \frac{f - s_i f}{x_i-x_{i+1}} \in R.
\]
For any element $w \in S_n$ we obtain well-defined
  operators $\partial_w = \partial_{i_1}
\dots \partial_{i_m}$ where $w = s_{i_1} \dots s_{i_m}$ is a reduced
expression for $w$ in the generators $S$.

Consider elements of the form
\[
\kappa = \partial_{w_m} (x_1^{a_m}x_n^{b_m}  \partial_{w_{m-1}}
(x_1^{a_{m-1}}x_n^{b_{m-1}} 
\dots \partial_{w_1} (x_1^{a_1}x_n^{b_1}) \dots ))
\]
where $w_i \in S_n$ are arbitrary. 
We assume that $\sum \ell(w_i) = a + b$ where $a = \sum
a_i$ and $ b = \sum b_i$ so that $\kappa \in \ZM$ for degree reasons.
Given a subset $I \subset \{ 1, \dots, n-1\}$ let $w_I$ denote the
longest element in the parabolic subgroup $\langle s_j \rangle _{j \in I}$. Our main theorem is the following:

\begin{thm} \label{thm:main}
Suppose that $n \ge 1$ and $a, b \ge 0$ are as above, and that $\kappa \ne 0$.
Then there exists a reduced expression $\underline{w}$ for an element
of $S_{a+n+b}$ such that the intersection form in degree zero of $\underline{w}$ at
$w_I$, where $I = \{ 1,2, \dots, a + n + b-1\} \setminus \{ a, a + n
\}$, is the
$1 \times 1$ matrix $((-1)^a \kappa)$.
\end{thm}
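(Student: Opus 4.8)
The strategy is to realize the nested divided-difference expression $C$ geometrically, as an intersection form coming from a Bott–Samelson resolution, by reading the data $(w_i, a_i, b_i)$ as a recipe for a reduced word in a much larger symmetric group. The key dictionary is Soergel's Hom-formula: for a Bott–Samelson bimodule $BS(\underline{w})$ over $R$, one has $\Hom(BS(\underline w), R(k)) \cong$ a space whose graded rank is computed by applying the corresponding composition of Demazure operators and multiplications, and the intersection form at an element $x \le w$ is the Gram matrix of the "light leaves" pairing in the appropriate degree. When the light leaves basis of the relevant Hom-space is a single element, this Gram matrix is $1\times 1$ and its entry is exactly a number of the form $\partial_{\cdots}(x_1^{a}x_n^{b}\partial_{\cdots}(\cdots))$ — this is precisely the "canonical entries" phenomenon of He–Williamson \cite{HW}. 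So the first step is to set up this correspondence carefully: fix $N = a+n+b$, work in $R' = \ZM[y_1,\dots,y_N]$ for $S_N$, and identify the variables $x_1, x_n$ of the original $S_n$ with $y_{a+1}, y_{a+n}$ — the two "gaps" in the index set $I$. The reflections $s_j$ with $j \le a$ and $j \ge a+n$ will be used to "produce" the powers $x_1^{a_i}$ and $x_n^{b_i}$ via divided differences of the form $\partial_j(y_j^k \cdot (\text{lower}))$, which in the nil-Hecke calculus turn monomials in an outer variable into monomials in an inner variable; the reflections $s_j$ with $a < j < a+n$ encode the elements $w_i \in S_n$ directly.

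The second step is the explicit construction of $\underline w$. Writing each $w_i$ as a reduced word in $s_{a+1}, \dots, s_{a+n-1}$, one interleaves these words with "staircase" strings in the outer generators $s_a, s_{a-1}, \dots$ (on the left) and $s_{a+n}, s_{a+n+1}, \dots$ (on the right), arranged so that applying the sequence of divided-difference operators dictated by $\underline w$, read right-to-left, reproduces exactly the nested expression defining $C$: each outer staircage of length $a_i$ (resp. $b_i$) on pass $i$ contributes the factor $x_1^{a_i}$ (resp. $x_n^{b_i}$) and the inner block contributes $\partial_{w_i}$. One must check that the word so produced is reduced — here the fact that the outer and inner generators interact only through a single commuting relation at the boundary ($s_a$ commutes with $s_{a+2}, s_{a+3}, \dots$, and one uses the one braid/commutation with $s_{a+1}$ judiciously) keeps the length additive, so $\sum \ell(w_i) = a+b$ (our degree hypothesis) translates into $\ell(w) = $ the sum of the lengths of all the pieces, i.e. the word is reduced. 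The target element is $w_I = $ the longest element of the parabolic $\langle s_j : j \in I\rangle$, i.e. the product of the longest elements of the two "halves" $\{1,\dots,a\}\cup\{\text{inner part}\}$ — wait, more precisely $I$ omits exactly $a$ and $a+n$, so $\langle s_j\rangle_{j\in I} = S_{\{1,\dots,a\}} \times S_{\{a+1,\dots,a+n\}} \times S_{\{a+n+1,\dots,N\}}$ sitting inside $S_N$, and $w_I$ is the product of the three longest elements; one checks $x = w_I$ is $\le w$ in Bruhat order for the word we built and that the fibre of the Bott–Samelson map over (the point indexed by) $w_I$ contributes a $1$-dimensional space of local intersection cohomology, so the intersection form is genuinely $1\times 1$.

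The final step is to evaluate that $1\times 1$ form and match it with $\pm C$. Using \cite{HW} (the canonical-entry formula) together with Soergel's description of the pairing on Bott–Samelson bimodules, the single entry is computed by the same nil-Hecke composite that defines $C$: one gets $\pm\partial_{w_m}(y_{a+1}^{a_m} y_{a+n}^{b_m} \partial_{w_{m-1}}(\cdots)) = \pm C$ once we substitute back $y_{a+1} \leftrightarrow x_1$, $y_{a+n} \leftrightarrow x_n$. The hypothesis $C \neq 0$ is what guarantees the relevant light-leaf is nonzero, so the Hom-space has the predicted rank and the intersection form really is the $1\times 1$ matrix $(\pm C)$ rather than a zero or larger matrix. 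The main obstacle, and the place that will need the most care, is the combinatorial bookkeeping of step two: verifying that the interleaved word is reduced and that the chosen light-leaves path through it is the unique one landing at $w_I$ in the correct degree — i.e. that no "extra" basis vectors appear that would enlarge the intersection form. This is exactly the kind of subtlety that \cite{HW} was designed to control, so I would lean on their results to isolate the canonical $1\times 1$ block rather than attempt to analyze the full light-leaves basis by hand.
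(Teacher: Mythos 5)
Your proposal is correct and matches the paper's proof essentially step for step: embed $S_n$ as the middle block $W_M$ of the tripartite parabolic $W_A \times W_M \times W_B \subset S_{a+n+b}$, build $\underline{w}$ by interleaving reduced words for the $w_i$ with staircase sequences in the outer generators, check reducedness and the uniqueness of the defect-zero subexpression landing at $w_I$ (so the form is genuinely $1\times 1$), and evaluate that form via the He--Williamson nil-Hecke formula, obtaining the coefficient of $D_{w_I}$ in the product of Demazure operators and roots. The combinatorial bookkeeping you flag as the delicate step is indeed where the bulk of the paper's work lies (its Lemmas on forced entries of subexpressions, uniqueness of the defect-zero one, and the two nil-Hecke lemmas collapsing the product to $(-1)^a C\, D_{w_I}$).
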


The construction of the expression $\un{w}$ is explicit and
combinatorial based on $w_1, \dots, w_m$, $a_1, \dots, a_m$ and
$b_1, \dots, b_m$. We will also see that for certain
  choices of $a_i, b_i, w_i$ the prime factors of the numbers $\kappa$
grow exponentially in $h = n + a + b$.

\subsection{Schubert calculus} \label{sec:Schubert}
We explain why ``Schubert calculus'' occurs in
the title. Consider the coinvariant ring $C$ for the action of $W = S_n$
on $R$. That is, $C$ is equal to $R$ modulo the ideal generated by
$W$-invariant polynomials of positive degree. The Borel isomorphism
gives a canonical identification of $C$ with the integral cohomology
of the complex flag variety of $\SL_n$.

The divided difference operators $\partial_w$ act on $C$,
as do elements of $R$. 
The coinvariant ring $C$ has a graded $\ZM$-basis given by the
Schubert classes $\{ X_w \;| \;w \in S_n \}$ (normalised with $X_{w_0} = x_1^{n-1}x_2^{n-2}
\dots x_{n-1}$ and $X_w = \partial_{ww_0} X_{w_0}$). 
We have:
\begin{equation} \label{eq:dem}
\partial_i X_w = \begin{cases} X_{s_iw} & \text{if $s_iw < w$,} \\ 0 &
  \text{otherwise.} \end{cases}
\end{equation}
The action of multiplication by $f \in R$ of degree two is given as
follows (the Chevalley formula):
\begin{equation} \label{eq:Chevalley}
f \cdot X_w = \sum_{t \in T \atop \ell(tw) = \ell(w) + 1 } \langle f,
  \alpha_t^\vee \rangle X_{tw}.
\end{equation}
(Here $T$ denotes the set of reflections (transpositions) in $S_n$ and
if $t = (i,j) \in T$ with $i < j$ then
$\alpha_t^\vee = \e_i - \e_j$ where $\{\e_i\}$ is the dual basis to $x_1,
\dots, x_n$.)

Now consider the numbers one may obtain as coefficients in the basis
of Schubert classes by multiplication by $x_1$ and
$x_n$ and by applying Demazure operators, starting with $X_{\id}$.  
Because $\partial_{w}X_{w^{-1}} = X_{\id} = 1$, any coefficient of any
Schubert class that we obtain in this way can be realised as the coefficient of $X_{\id}$.
Now
Theorem \ref{thm:main} says that any such number occurs as torsion in
$\SL_{n+a+b}$ where $a$ (resp. $b$) counts the number of times that one
has applied the operator of multiplication by $x_1$ (resp. $x_n$).

% Hence the numbers $C$ can be interpreted as the coefficients which
% arise after repeated application of the nilpotent operators $x_1, x_n, \dd_w$ to $1 = X_1 \in H = \bigoplus \ZM X_w$.

% \subsection{Remarks on the bound} Despite the existence of good
% literature on the subject, there seems to be widespread confusion
% about the bound in Lusztig's conjecture. Jantzen noticed that
% Steinberg's tensor product theorem means that if one would like the
% composition factors of a Weyl module to be independent of $p$ one must
% assume ... This became known as the Jantzen condition. It is easy to
% see that if $p \ge 2h-3$ then the Janzten region contains the
% fundamental box.

\subsection{Note to the reader} This paper is entirely algebraic in
that it relies only on Soergel (bi)modules, their diagrammatics and connections to
representation theory (due to Soergel). Except in remarks, we neither explain nor use the
relation to constructible sheaves and torsion. An alternative
geometric proof of the main theorem (discovered a year after this
paper was first circulated) is given in \cite{WT}.

\subsection{Structure of the paper}

\begin{description}
\item[\S\ref{sec:SB}-\ref{sec:IF}] Contains background on Soergel
  (bi)modules and intersection forms.
%  intersection forms and the nil Hecke ring.
\item[\S\ref{sec:proof}] We prove the main theorem.
\item[\S\ref{sec:ex}] We use our main theorem for $n = 4, 5$ to give
  examples of torsion.
\item[\S\ref{sec:lusztig}] We explain the connection to the Lusztig
  conjecture.
\item[\S\ref{sec:james}] We explain the connection to the James
  conjecture.
\item[\S\ref{app}] We (AK, PM and GW) prove exponential growth of
  torsion. 
\end{description}

\subsection{Acknowledgements} The ideas of this paper crystallised after
long discussions with Xuhua He. I would like to thank him warmly for
the invitation to Hong Kong and the many interesting discussions that
resulted from the visit. I
would also like to thank Ben Elias for countless hours (often
productive, always enjoyable) getting to know Soergel
bimodules. His influence is omnipresent in this paper.

Thanks to Joe Chuang for useful correspondence and explaining how to get counter-examples in the
symmetric group. Finally, thanks to Henning Haahr Andersen, Ben Elias, Peter Fiebig, Anthony Henderson, Daniel
Juteau, Nicolas Libedinsky, Kaneda Masaharu and especially
Patrick Polo and the referees for valuable comments.
These results were announced in June 2013 at ICRT VI in Zhangjiajie,
China.

\section{Soergel bimodules} \label{sec:SB}

In the first three sections we recall what we need from the theory of Soergel (bi)modules and
intersection forms. This paper is not self-contained. The main
references are \cite{S90, SHC, SB, EKh, EDC, EW}.

Fix $n \ge 1$ and let $W = S_n$ denote the symmetric group on $n$
letters. Throughout we view $W$ as a Coxeter group with simple
reflections $S = \{ (i,i+1) \; | \; 1 \le i < n \}$, and denote by
$\ell$ its length function and $\le$ its Bruhat order. Let $\HC$ denote the
Hecke algebra of $(W,S)$ over $\ZM[v^{\pm 1}]$ normalised as in 
\cite{SoeKL}. Let $\{ H_x \}_{x \in W}$ and $\{\un{H}_x\}_{x \in W}$ denote its
standard and Kazhdan-Lusztig bases.

Fix a field $\Bbbk$ of characteristic $p > 2$ and let $R =
\Bbbk[x_1, \dots, x_n]$. Then $W$ acts by permutation of variables on
$R$ (graded algebra automorphisms). The reader may
  easily check (see e.g. \cite[Lemma
  7.4]{LLC}) that this action is reflection faithful in the sense
  of \cite[Definition 1.5]{SB}. Given $s
\in S$ we denote by $R^s \subset R$ the invariant subring.

Given a $\ZM$-graded object (vector space, module, bimodule) $M = \bigoplus
M^i$ we let $M(j)$ denote the shifted object: $M(j)^i = M^{i+j}$.  

The category of \emph{Soergel bimodules} $\BC$ is the full additive monoidal graded
Karoubian subcategory of graded $R$-bimodules generated by $B_s = R \otimes_{R^s} R(1)$ for
all $s \in S$. %(So $B_s$ is generated as an $R$-bimodule by $1 \otimes
%1$, of degree $-1$.)
%  (Here $R \otimes_{R^s} R(1)$ denotes the $R$-bimodule
% $R \otimes_{R^s} R$ shifted so that $1 \otimes 1$ is in degree $-1$.)
In other words, the indecomposable Soergel bimodules are the shifts of the
indecomposable direct summands of the \emph{Bott-Samelson bimodules}
\[
B_{\un{w}} = B_{s_1} \otimes_R B_{s_2} \otimes_R \dots \otimes_R B_{s_m}(m)
\]
for all expressions $\un{w} = s_1s_2 \dots s_m$ in $S$. For any $w \in S_n$
let $B_w$ denote the indecomposable self-dual Soergel bimodule which
occurs as a summand of $B_{\un{w}}$ for any reduced expression
$\un{w}$ for $w$, and is not isomorphic to a summand of
$B_{\un{w}'}$ for any shorter $\un{w}'$. The set $\{ B_w \}_{w \in W}$  coincides with
the set of all indecomposable Soergel bimodules, up to shifts in the
grading \cite{SB}.

\begin{remark}
  In \cite{SB} Soergel develops the theory of Soergel bimodules for a
  reflection faithful representation $V$ over an infinite field of
  characteristic $\ne 2$. We
  have remarked above that the reflection faithful hypothesis is
  always satisfied. The
  assumption that $\Bbbk$ is infinite is made
  in order to identify $R$ with the polynomial functions on $V$. However all the results of \cite{SB} hold if one
  simply defines $R$ to be the symmetric algebra on $V^*$, as we do. Alternatively, the reader may assume that
  $\Bbbk$ is infinite throughout.
\end{remark}

We denote by $[\BC]$ the split Grothendieck group of $\BC$ (i.e. $[B]
= [B'] + [B'']$ if $B \cong B' \oplus B''$). We make $[\BC]$ into a
$\ZM[v^{\pm 1}]$ algebra via $v[M] := [M(-1)]$, $[B][B'] := [B
\otimes_R B']$. In \cite{SB} Soergel
proves that there exists a unique isomorphism of $\ZM[v^{\pm
  1}]$-algebras
\[
\ch : [\BC] \simto \HC
\]
such that  $\ch(R(-1)) = v$ and $\ch(B_s) = \un{H}_s$ for all $s \in S$.%  The map
% $\ch(B)$ has an explicit description involving certain support
% filtrations on $B$, from which it is clear that $\ch(B)$ is always a
% nonnegative linear combination of elements of the standard basis.
% In \cite{SB} Soergel
% proves that the category of Soergel bimodules categorifies the Hecke
% algebra, as long the characteristic of $\Bbbk$ is not 2 and the
% representation of $W$ on $X^\vee \otimes_\ZM \Bbbk$ satisfies a
% non-degeneracy condition (it should be ``reflection faithful'' 
% \cite[Def. 1.5]{SB}). The most difficult part of Soergel's proof is a
% classification of the indecomposable Soergel bimodules: for all $w \in
% W$ there exists an indecomposable Soergel bimodule $B_w$ which occurs
% with multiplicity one as a summand of $BS(\un{w})$ for any reduced
% expression $\un{w}$ for $w$, and the set $\{ B_w \}$ gives a set of
% representatives for the isomorphism classes of indecomposable Soergel bimodules, up to shifts in the
% grading.

\begin{remark}
  Under our assumptions $B_w$ may be
  realised as the equivariant intersection cohomology of the
  indecomposable parity sheaf \cite{JMW2} of the Schubert variety labelled
  by $w$ in the flag variety \cite{Fcom, FW}. In particular, if $\Bbbk$ is of
  characteristic zero, then $B_w$ is the equivariant intersection
  cohomology of a Schubert variety. In fact the whole theory of
  Soergel bimodules can be seen as providing an algebraic description
  of the Hecke category.
\end{remark}

Set $\p\un{H}_x := \ch(B_x) \in \HC.$ Then $\{ \p\un{H}_x \}_{x \in W}$ is a
basis which only depends on the characteristic $p$ of $\Bbbk$, the
\emph{$p$-canonical basis} (see \cite{WO,JW}). Let us write
\begin{gather} \label{eq:pcan}
\un{H}_x = \sum h_{y,x} H_y, \qquad
\p\un{H}_x = \sum \p h_{y,x} H_y, \qquad
\p\un{H}_x = \sum {}^pa_{y,x} \un{H}_y.
\end{gather}
for polynomials $h_{y,x} \in \ZM[v]$ and $\p h_{y,x}, {}^pa_{y,x} \in
\ZM[v^{\pm 1}]$.   The polynomials $h_{y,x}$ are (normalisations of)
  Kazhdan-Lusztig polynomials and have non-negative
  coefficients. The polynomials $\p h_{y,x}, {}^pa_{y,x}$ also have
  non-negative coefficients \cite[Proposition 4.2]{JW}.

Throughout this paper we will say that $p$
\emph{occurs as torsion in $\SL_n$} if there exists $x \in W$ such
that $\p\un{H}_x \ne \un{H}_x$.

%\begin{remark}

%\end{remark}

\section{Soergel modules} \label{sec:SM}
In this section we assume that $p > n$, so that the results of
\cite{Soe} are available.

Let $R^W_+ \subset R$ denote the $W$-invariants
of positive degree, $\langle R^W_+\rangle$ the ideal they generate, and
$C = R/\langle R^W_+\rangle$ the coinvariant
algebra, which inherits an (even) grading and a $W$-action from $R$. 
Let $\CC$ denote the category of \emph{Soergel
  modules} consisting of all
\[
D_{\un{w}} := C \otimes_{C^{s_m}} \dots \otimes_{C^{s_2}} C
\otimes_{C^{s_1}} \Bbbk(m)
\]
for expressions $\un{w} = s_1s_2 \dots s_m$ in $S$, together with
their shifts, direct sums and summands inside the category of graded
$C$-modules. (Note the order of
tensor factors.)

For a reduced expression $\un{x}$ for $x$ let $D_x$ denote the
unique summand of $D_{\un{x}}$ which does not occur as a summand of
$D_{\un{x}'}$ for any shorter expression $\un{x}'$. The set $\{ D_x \;
| \; x \in W \}$ is well-defined and gives a set of representatives
for the isomorphism classes of indecomposable Soergel modules (up to
shift) \cite[Theorem 2.8.1]{Soe}.

How to go from Soergel bimodules to Soergel modules? Given a right $R$-module $M$ which is killed by
$R^W_+$ the canonical map $M \otimes_{R^s} R \onto M
\otimes_{C^s} C$ is an isomorphism. Hence we have an
isomorphism of graded right $C$-modules:
\[
\Bbbk \otimes_R R \otimes_{R^s} R \otimes_{R^t} \dots \otimes_{R^u} R
\cong \Bbbk \otimes_C C \otimes_{C^s} C
\otimes_{C^t} \dots \otimes_{C^u} C.
\]
It follows that if we compose the functor $M \mapsto \Bbbk \otimes_R M$ with the
equivalence between right and left $C$-modules ($C$ is commutative) we
obtain a functor
\[
c : \BC \to \CC
\]
with $c(B_{\un{w}}) = D_{\un{w}}$.

\begin{lem}
  $c(B_x) \cong D_x$.
\end{lem}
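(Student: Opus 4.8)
The plan is to show that the functor $c : \BC \to \CC$ sends the indecomposable self-dual bimodule $B_x$ to the indecomposable Soergel module $D_x$, by tracking $c$ through the relevant Bott–Samelson objects and using a dimension/character count. First I would recall the two defining properties of $B_x$ inside a Bott–Samelson $B_{\un{x}}$ (for $\un{x}$ reduced): it occurs as a summand of $B_{\un{x}}$, and it does not occur as a summand of $B_{\un{x}'}$ for any shorter word $\un{x}'$. Since $c$ is additive and $c(B_{\un{w}}) = D_{\un{w}}$ for every word $\un{w}$, applying $c$ to a direct sum decomposition $B_{\un{x}} \cong B_x \oplus (\text{shifts of } B_y, y < x)$ gives $D_{\un{x}} \cong c(B_x) \oplus (\text{shifts of } c(B_y))$. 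So by induction on the Bruhat order it suffices to prove that $c(B_x)$ is indecomposable with $D_x$ as a summand, and that $D_x$ does not appear in $c(B_y)$ for $y < x$; equivalently, that $c(B_x)$ has the correct graded rank.

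The cleanest way to pin down the graded rank is via the character (Grothendieck group) map. On the bimodule side $\ch(B_x) = \p\un{H}_x$, and one knows $\p\un{H}_x = \un{H}_x + \sum_{y < x}(\text{coeffs})\un{H}_y$, i.e. $\p a_{x,x} = 1$ and $\p a_{y,x}$ is supported on $y \le x$. The functor $c$ descends to a homomorphism $[\BC] \to [\CC]$ on split Grothendieck groups; since $[\CC]$ is also free with basis $\{[D_x]\}$ (by Soergel's theorem \cite[Theorem 2.8.1]{Soe}) and $c([B_{\un w}]) = [D_{\un w}]$, the induced map intertwines the standard bases, so $[c(B_x)] = [D_x] + \sum_{y<x}\p a_{y,x}[D_y]$. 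In particular $D_x$ occurs in $c(B_x)$ with multiplicity one, and $D_y$ for $y \not\le x$ does not occur. What remains is to rule out the extra terms $\sum_{y<x}\p a_{y,x}[D_y]$ actually splitting off as summands of $c(B_x)$ — that is, to show $c(B_x)$ is indecomposable.

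For indecomposability I would use the standard "support" or "highest term" argument: $c(B_x)$ is a graded $C$-module summand of $D_{\un x}$, hence is a graded module over the local ring $C$, so $\End(c(B_x))$ is a (graded) local ring once we show it has no nontrivial idempotents in degree $0$; equivalently I must show $\dim_\Bbbk \End^0(c(B_x)) = 1$. Here I would exploit that $c$ is essentially base change $\Bbbk \otimes_R (-)$ composed with the left/right swap, and that $R$-bimodule Hom's in degree $0$ between summands of Bott–Samelson bimodules are computed by Soergel's Hom formula; a nilpotence/Nakayama argument then shows $\End^0_{\BC}(B_x) \to \End^0_{\CC}(c(B_x))$ is surjective (the obstruction lives in positive degree or is killed by $\otimes_R \Bbbk$), and $\End^0_{\BC}(B_x) = \Bbbk$ since $B_x$ is indecomposable. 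Hence $\End^0_{\CC}(c(B_x)) = \Bbbk$, so $c(B_x)$ is indecomposable, and combined with the Grothendieck-group computation it must equal $D_x$.

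The main obstacle I expect is the last step: controlling $\End^0_{\CC}(c(B_x))$, i.e. proving that no new degree-zero endomorphisms are created by passing from $R$-bimodules to $C$-modules. One has to be careful that $c$ is not full or faithful in general, so one genuinely needs the Soergel Hom-formula (and the fact that Hom spaces between Soergel bimodules are free graded modules over $R$, specialising correctly under $\otimes_R\Bbbk$) rather than a formal argument; a convenient shortcut, if the references permit, is instead to combine the Grothendieck-group identity $[c(B_x)] = [D_x] + \sum_{y<x}\p a_{y,x}[D_y]$ with an \emph{a priori} lower bound $[D_x] \le [c(B_x)]$ coming from $D_x$ being a summand of $D_{\un x} = c(B_{\un x})$ together with the bimodule decomposition — and then invoke that $c$ takes indecomposables to indecomposables, which Soergel establishes in \cite[\S2.8]{Soe}, to conclude directly.
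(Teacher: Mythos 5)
Your high-level plan matches the paper's: reduce to showing that $c$ takes indecomposables to indecomposables, then use the defining properties of $B_x$ and $D_x$ (summand of $B_{\un x}$, not of shorter $B_{\un x'}$, and similarly for $D$) together with Krull--Schmidt. The Grothendieck-group detour is harmless but unnecessary for the lemma: once $c(B)$ is indecomposable whenever $B$ is, the defining properties already pin down $c(B_x)\cong D_x$ directly, with no need to track $\p a_{y,x}$.

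The genuine gap is precisely where you flag it, in controlling $\End^0_\CC(c(B_x))$. Your sketch ("a nilpotence/Nakayama argument then shows $\End^0_{\BC}(B_x)\to\End^0_\CC(c(B_x))$ is surjective; the obstruction lives in positive degree or is killed by $\otimes_R\Bbbk$") does not actually produce a proof, because there is no a priori reason why base change $\Bbbk\otimes_R(-)$ should not create new degree-zero endomorphisms — that is exactly what has to be ruled out, and it is not a Nakayama-type phenomenon. The paper instead proves the sharper statement that the canonical map
\[
\Bbbk\otimes_R\Hom_\BC(B_{\un x},B_{\un y})\;\longrightarrow\;\Hom_C(D_{\un x},D_{\un y})
\]
is an isomorphism, and this needs two real inputs: injectivity comes from reducing (via the biadjunction $(\otimes B_s,\otimes B_s)$) to $\un x=\emptyset$ and using that the invariants $\Gamma_{\id}B_{\un y}$ form a \emph{left $R$-module summand} of $B_{\un y}$ (a nontrivial fact, \cite[Proposition 5.9]{SB}); surjectivity is then a dimension count, matching Soergel's bimodule Hom formula \cite[Theorem 5.15]{SB} against a representation-theoretic computation of $\dim\Hom_C(D_{\un x},D_{\un y})$ from \cite[Lemma 2.11.2]{Soe}. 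Your proposal invokes "the Soergel Hom-formula" but never brings in the second, module-side ingredient, without which the dimension comparison does not close. Once the Hom isomorphism is in hand, indecomposability is immediate: $\End_C(c(B))=\Bbbk\otimes_R\End_\BC(B)$ is a quotient of a local ring, hence local.

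Finally, the proposed shortcut at the end is circular: \cite[\S 2.8]{Soe} classifies the indecomposable Soergel modules (the $D_x$), but it does not assert that the functor $c$ preserves indecomposability — that is exactly the content of this lemma, and cannot be cited as a black box here.
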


\begin{proof} \emph{Step 1:} We claim that the natural map provides an isomorphism:
\begin{gather} \label{eq:bsiso}
\Bbbk \otimes_R \Hom^\bullet_{\BC}(B_{\un{x}}, B_{\un{y}}) \simto \Hom^\bullet_{C}(
c(B_{\un{x}}), c(B_{\un{y}})).
\end{gather}
(Here and in the rest of the proof, $\Hom^\bullet$ denotes the graded
module of morphisms of all degrees.)
By repeated application of the biadjunction ($\otimes_R B_s(1), \otimes_R
B_s(1)$) we may assume that $\un{x}$ is the empty sequence.
The map $\phi \mapsto \phi(1)$ gives a canonical identification of
$\Hom^\bullet_\BC(R, B_{\un{y}})$ with the submodule of invariants
\[
\Gamma_{\id} B_{\un{y}} := \{ m \in B_{\un{y}} \; | \; rm = mr \text{ for all $r \in R$}\}.
\]
Now $\Gamma_{\id} B_{\un{y}}$ is the first step in the filtration
$\Gamma_{\le 0} B_{\un{y}} \subset \Gamma_{\le 1} B_{\un{y}} \subset
\dots$ considered after the proof of \cite[Proposition 5.7]{SB}, and from
\cite[Proposition 5.9]{SB} we deduce that the subquotients of this
filtration are free as left $R$-modules. Thus $\Gamma_{\id} B_{\un{y}}$ is a summand of $B_{\un{y}}$ as a left
$R$-module. The injectivity of $\eqref{eq:bsiso}$ follows.

% The surjectivity follows by dimension comparison: by \cite[Theorem
% 5.15]{SB} for the left hand side, and \cite[Lemma 2.11.2]{Soe} for the
% right hand side, both sides have the same (finite) dimension.

We deduce the surjectivity of \eqref{eq:bsiso} by showing that both sides have the same
(finite) dimension.
If $\un{y} = s_1s_2 \dots s_m$ let us write
$\un{H}_{s_1} \un{H}_{s_2} \dots \un{H}_{s_m} =
\sum g_x H_x$
for some $g_x \in \ZM[v^{\pm 1}]$. In the notation of \cite{SB}
we have, by \cite[Proposition 5.7]{SB},
\[
\sum_{m \in \ZM} (B_{\un{y}} : \nabla_{\id}[m])v^{-m} = g_{\id}
\]
and $(R:\Delta_x[m]) =
\delta_{x,{\id}}\delta_{m,0}$ (Kronecker's $\delta$). Now we apply
\cite[Theorem
5.15]{SB} to deduce that $\Hom^\bullet(R, B_{\un{y}})$ is free of rank
$g_{\id}(1)$ over $R$. On the other hand, if $\langle -, -\rangle : \ZM
W \times \ZM W \to \ZM$ denotes the pairing with $\langle x, y
\rangle = \delta_{x,y}$, then by \cite[Lemma 2.11.2]{Soe}, we have
\[
\dim \Hom^\bullet_C( \Bbbk, c(B_{\un{y}})) = \langle {\id}, \sum g_x(1) x \rangle = g_{\id}(1).
\]
The surjectivity follows.

\emph{Step 2:} Because $c(B_{\un{w}}) = D_{\un{w}}$ we can appeal to
the defining properties of $B_x$ and $D_x$ to see that it is enough to show: if $B$ is
indecomposable, then so is $c(B)$. By the previous step
$\End_C(c(B)) = \Bbbk \otimes_R \End_\BC(B)$, and so $\End_C(c(B))$ is a quotient of 
the local ring $\End_\BC(B)$. Now the result
follows as a non-zero quotient of a local ring is local.
\end{proof}

\begin{remark}
  The above proof uses representation theory, via
  \cite[Lemma 2.11.2]{Soe}. Soergel has found an algebraic proof of
  the above lemma, valid for any finite Coxeter group
  (unpublished).
\end{remark}

Denote by $f$ the functor of forgetting the grading on a $C$-module,
and let $f\CC$ denote the essential image of $\CC$ under $f$. By
\cite[Theorem 2.8.1]{Soe}, 
the indecomposable objects in $f \CC$ are precisely the $\{ fD_x \}$.
We denote by
$[f\CC]$, $[\CC]$ the split Grothendieck groups of $f\CC$ and
$\CC$ respectively. Because $\CC$ is graded, $[\CC]$ is naturally a
$\ZM[v^{\pm 1}]$-module via $v[M] := [M(-1)]$ as above.
These observations, together with the above lemma, show that we have a commutative diagram:
\begin{gather*}
  \begin{tikzpicture}[xscale=2.5,yscale=1.7]
%    \node (o) at (0,1) {$[p\OC]$};
    \node (fc) at (1,1) {$[f\CC]$};
    \node (c) at (2,1) {$[\CC]$};
    \node (d) at (3,1) {$[\BC]$};
%\node (b1) at (0,0) {$\ZM S_n$};
%\node at (0.5,0) {$=$};
\node (b2) at (1,0) {$\ZM S_n$};
\node (b3) at (2,0) {$\HC$};
\node at (2.5,0) {$=$};
\node (b4) at (3,0) {$\HC$};
%\draw[->] (o) to node[above] {$\mathbb{V}$} (fc);
\draw[->] (c) to node[above] {$f$} (fc);
\draw[->] (d) to node[above] {$c$} node[below]{$\sim$} (c);
%\draw[->] (o) to node[left] {$A$} (b1);
\draw[->] (fc) to node[left] {$\b$} node[right] {$\sim$}  (b2);
%\draw[->] (c) to node[right] {$\ch'$} (b3);
\draw[->] (c) to node[right] {$\sim$}  (b3);
\draw[->] (d) to node[right] {$\ch$} node[left] {$\sim$} (b4);
\draw[->] (b3) to node[below] {$1 \mapsfrom v$} (b2);
  \end{tikzpicture}
\end{gather*}
where $\b$ is defined by
\begin{equation} \label{eq:beta}
\b(fD_x) = \ch(B_x)_{|v = 1} = {\p\un{H}_x}_{|v = 1}.
\end{equation}

\section{Intersection forms} \label{sec:IF}

Let $\BC$
denote the category of Soergel bimodules defined above. Given any
ideal $I \subset W$ (i.e. $x \le y \in I \Rightarrow x \in
I$)  we denote by $\BC_I$ the ideal of $\BC$
generated by all morphisms which factor through a Bott-Samelson
bimodule $B_{\un{y}}$, where $\un{y}$ is a reduced expression for $y
\in I$. 

Given $x \in W$ we denote by $\BC^{\ge x}$ the quotient category
$\BC / \BC_{\not \ge x}$ where $\not \ge \hspace{-0.1cm} x := \{ y | y \not \ge
x \}$. We write $\Hom_{\ge x}$ for (degree zero) morphisms in
$\BC^{\ge x}$. All Bott-Samelson bimodules $B_{\un{x}}$ corresponding to 
reduced expressions $\un{x}$ for $x$ become canonically isomorphic to
$B_x$ in $\BC^{\ge x}$. We have $\End_{\ge x}(B_x) = \Bbbk$.
Given any expression $\un{w}$ in $S$ the \emph{intersection form}\footnote{The
terminology ``intersection form'' comes from geometry: in de Cataldo and
Migliorini's Hodge theoretic proof of the decomposition theorem, a key
role is played by certain intersection forms
associated to the fibres of proper maps \cite{dCM, dCM2}. In our setting, these
intersection forms are associated to the fibres of Bott-Samelson
resolutions of Schubert varieties. The relevance of these forms for
the study of torsion in intersection cohomology was pointed out in
\cite{JMW2}.}
is the canonical pairing
\[
I_{x,\un{w},d}^{\Bbbk} : \Hom_{\ge x}(B_x(d), B_{\un{w}}) \times \Hom_{\ge x}(B_{\un{w}}, B_x(d))
\to \End_{\ge x}(B_x(d)) = \Bbbk.
\]

The following is standard (see e.g. \cite[Lemma 3.1]{JMW2} for a
similar situation):

\begin{lem} \label{lem:rank}
  The multiplicity of $B_x(d)$ as a summand of $B_{\un{w}}$ equals
  the rank of $I^{\Bbbk}_{x,\un{w},d}$.
\end{lem}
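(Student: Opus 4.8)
The statement is the standard translation, in the semisimple category of Soergel bimodules, of the usual relationship between multiplicity spaces and (non-degenerate) intersection forms. The plan is to work in the quotient category $\BC^{\ge x}$, where $\End_{\ge x}(B_x) = \Bbbk$ and $B_x(d)$ is (up to isomorphism) the only indecomposable object, and to show that the pairing $I^{\Bbbk}_{x,\un{w},d}$ between $\Hom_{\ge x}(B_x(d), B_{\un{w}})$ and $\Hom_{\ge x}(B_{\un{w}}, B_x(d))$ encodes exactly how many times $B_x(d)$ splits off $B_{\un{w}}$.

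\emph{Step 1.} First I would recall that $B_{\un{w}}$, being a Soergel bimodule, decomposes as a finite direct sum $\bigoplus_{z,e} B_z(e)^{\oplus m_{z,e}}$ of shifts of indecomposables (Karoubian property, \cite{SB}). Passing to $\BC^{\ge x}$ kills every summand $B_z(e)$ with $z \not\ge x$, and — because morphisms from/to objects supported on strictly larger strata behave well — one checks (as in \cite[Lemma 3.1]{JMW2}) that in $\BC^{\ge x}$ every indecomposable summand with $z \ne x$ becomes a zero object, while the summands $B_x(e)$ with $e \ne d$ contribute nothing to $\Hom_{\ge x}(B_x(d),-)$ or $\Hom_{\ge x}(-,B_x(d))$ for degree (i.e.\ grading) reasons, since $\Hom_{\ge x}(B_x(d),B_x(e)) = 0$ unless $e=d$. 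Hence in $\BC^{\ge x}$ we have $\Hom_{\ge x}(B_x(d),B_{\un{w}}) \cong \Bbbk^{m}$ and $\Hom_{\ge x}(B_{\un{w}},B_x(d)) \cong \Bbbk^{m}$ where $m = m_{x,d}$ is the multiplicity we want.

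\emph{Step 2.} Next I would analyze the pairing itself. Choosing a decomposition $B_{\un{w}} \cong B_x(d)^{\oplus m} \oplus Q$ in $\BC^{\ge x}$ (with $Q$ having no $B_x(d)$-summand, hence $\Hom_{\ge x}(B_x(d),Q) = \Hom_{\ge x}(Q,B_x(d)) = 0$), the inclusion and projection maps $\iota_j : B_x(d) \hookrightarrow B_{\un{w}}$, $\pi_j : B_{\un{w}} \twoheadrightarrow B_x(d)$ for $j = 1,\dots,m$ give bases of the two Hom-spaces, and $\pi_k \circ \iota_j \in \End_{\ge x}(B_x(d)) = \Bbbk$ equals $\delta_{jk}$. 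Therefore in these bases the matrix of $I^{\Bbbk}_{x,\un{w},d}$ is the $m \times m$ identity, so the form is non-degenerate of rank $m$. Since rank is basis-independent, this proves $\rank I^{\Bbbk}_{x,\un{w},d} = m$, the multiplicity of $B_x(d)$ in $B_{\un{w}}$.

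\emph{Main obstacle.} The one genuinely delicate point is justifying that, in the quotient $\BC^{\ge x}$, the object $B_x$ is the unique indecomposable up to shift and that $\End_{\ge x}(B_x) = \Bbbk$ with the Hom-spaces behaving as claimed — i.e.\ that no ``extra'' morphisms appear or disappear in the quotient that would spoil the identification of Step 1 or the $\delta_{jk}$ computation of Step 2. This is exactly the content that is quoted as standard (``see e.g.\ \cite[Lemma 3.1]{JMW2} for a similar situation''), and it rests on the recollement-type structure of the $\BC^{\ge x}$'s together with Soergel's hom-formula; I would simply cite this rather than reprove it, so the argument above is essentially a two-line formal consequence.
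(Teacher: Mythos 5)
The paper itself offers no proof of this lemma; it simply cites \cite[Lemma 3.1]{JMW2} as ``a similar situation''. Your plan — pass to $\BC^{\ge x}$, split off the $B_x(d)$-isotypic part, and read the rank off the resulting block decomposition of the pairing — is the standard argument and is the one that reference uses. However, there is a genuine gap in Step~2 (and its echo in the conclusion of Step~1): the parenthetical claim that ``$Q$ having no $B_x(d)$-summand, hence $\Hom_{\ge x}(B_x(d),Q) = \Hom_{\ge x}(Q,B_x(d)) = 0$'' is not a valid inference, and the asserted vanishing is false in general. The quotient $\BC^{\ge x}$ still has plenty of positive-degree endomorphisms of $B_x$ (one is in an ``equivariant'' setting), so $\Hom_{\ge x}(B_x(d), B_x(e))$ need not vanish for $e > d$; and $\Hom_{\ge x}(B_x(d), B_z(e))$ can also be nonzero for $z > x$. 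Consequently $\Hom_{\ge x}(B_x(d), B_{\un{w}})$ is in general strictly bigger than $\Bbbk^m$.

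What the argument actually needs, and what is true, is that the pairing restricted to the $Q$-blocks is zero. Writing $B_{\un{w}} \cong B_x(d)^{\oplus m}\oplus Q$, decompose $\Hom_{\ge x}(B_x(d),B_{\un{w}}) = \Bbbk^m \oplus \Hom_{\ge x}(B_x(d),Q)$ and dually. The cross-terms vanish for trivial reasons (the projection to $Q$ of the inclusion of $B_x(d)^{\oplus m}$ is zero, and dually). For the $Q$--$Q$ block: if $\phi\in\Hom_{\ge x}(B_x(d),Q)$ and $\psi\in\Hom_{\ge x}(Q,B_x(d))$ have $\psi\circ\phi\ne 0$, then since $\End_{\ge x}(B_x(d))=\Bbbk$ the composite is invertible, so $\phi$ is a split monomorphism and $B_x(d)$ is a direct summand of $Q$ (Karoubian category, local endomorphism ring) — contradicting the choice of $Q$. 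Hence the Gram matrix is $\bigl(\begin{smallmatrix} I_m & 0 \\ 0 & 0 \end{smallmatrix}\bigr)$ and the rank is $m$. With this repair your proof is correct; the rest of your outline, including the $\pi_k\circ\iota_j = \delta_{jk}$ computation, is fine and matches the intended (cited) argument.
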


%  \begin{proof}
%   Because $B_x$ is indecomposable, $\End(B_x)$ is a local ring and the
%   radical of $\End(B_x)$ is the kernel of the canonical surjection
% \[
% \textrm{can} : \End(B_x) \to \End_{\ge x}(B_x) = \End_{\ge x}(x) = \Bbbk.
% \]
% Now, it is a standard fact in Krull-Schmidt categories (see e.g. \cite[Lemma 3.1]{JMW2}) that the
% multiplicity of $B_x(d)$ as a summand of $BS(\un{w})$ is equal to the rank of the form
% \[
% \Hom(B_x(d), B_{\un{w}}) \times \Hom(B_{\un{w}}, B_x(d))
% \to \End(B_x) \stackrel{\textrm{can}}{\to} \Bbbk.
% \]
% However, the subspace of $\Hom(B_x(d), B_{\un{w}})$ belonging to
% $\DC_{\not \ge x} $ is certainly in the kernel of this form 
% ($\DC_{\not \ge x}$ is an ideal), and similarly for
% $\Hom(B_{\un{w}}, B_x(d))$. The form induced on the quotient is precisely the
% intersection form $I^{\Bbbk}_{x, \un{w}, d}$.
% \end{proof}

In the papers \cite{EKh, EW} the category of Soergel
bimodules is presented by generators and relations. More precisely,
a diagrammatic category is defined by generators and relations and it
is proved that its Karoubi envelope is equivalent to Soergel bimodules,
as a graded monoidal category. We will not repeat the rather
complicated list of generators and relations here, see \cite [\S
1.4]{EW} or \cite[\S 2.7]{HW}.

In the category $\DC$ the intersection form is explicit and amenable
to computation: see \cite[\S 2.10]{HW} for examples. From the
diagrammatic approach it is clear that $I_{x,\un{w},d}^{\Bbbk}$ is
defined over $\ZM$, in the sense that there exists an integral form
$I_{x,\un{w},d}$ on a pair of free $\ZM$-modules such that
$I_{x,\un{w},d}^{\Bbbk} = I_{x,\un{w},d} \otimes_{\ZM} \Bbbk$ for any
field $\Bbbk$.

% An important tool in the study of Soergel bimodules is Soergel's hom
% formula \cite[Theorem 5.15]{SB} which describes the graded rank of
% homomorphisms between Soergel bimodules in terms of the canonical form
% on the Hecke algebra. This formula was categorified by
% Libedinsky \cite{LLL} who introduced a recursively defined set of morphisms
% between Bott-Samelson bimodules, which he called \emph{light
%   leaves}. Certain compositions of these morphisms, called
% \emph{double leaves}, give an $R$-basis for morphisms of
% all degrees between Bott-Samelson bimodules. Light leaves and double
% leaves morphisms admit a nice diagrammatic description \cite[\S 1.5
% and \S 6]{EW}.

% From now on $\DC$ denotes the diagrammatic category of Soergel
% bimodules as defined in \cite{EW} over the field $\Bbbk$. Given any ideal $I \subset W$
% (i.e. $x \le y \in I \Rightarrow x \in
% I$)  we denote by $\DC_I$ the ideal of $\DC$
% generated by all morphisms which factor through a Bott-Samelson
% bimodule $BS(\un{y})$, where $\un{y}$ is a reduced expression for $y
% \in I$. 

% Given $x \in W$ we denote by $\DC^{\ge x}$ the quotient category
% $\DC / \DC_{\not \ge x}$. We write $\Hom_{\ge x}$ for (degree zero) morphisms in
% $\DC^{\ge x}$. All Bott-Samelson bimodules $BS(\un{x})$ corresponding to 
% reduced expressions $\un{x}$ for $x$ become canonically isomorphic to
% $B_x$ in $\DC^{\ge x}$. We denote
% the resulting object simply by $x$. We have $\End_{\ge x}(x) = \Bbbk$.

\begin{cor} The following are equivalent: \label{prop:ch}
  \begin{enumerate}
  \item The indecomposable Soergel bimodules in characteristic $p$ categorify the
    Kazhdan-Lusztig basis. That is, $\p\un{H}_x = \un{H}_x$ for all $x \in W$.
\item For all (reduced) expressions $\un{w}$, all $x \in W$ and all $m
  \in \ZM$ the graded ranks of $I_{x,\un{w},m} \otimes_{\ZM} \QM$ and
  of $I_{x,\un{w},m} \otimes_{\ZM} \Bbbk$ agree.
\item For all reduced expressions $\un{w}$ and all
  $x \in W$ the graded ranks of $I_{x,\un{w},0} \otimes_{\ZM} \QM$ and 
  of $I_{x,\un{w},0} \otimes_{\ZM} \Bbbk$ agree.
  \end{enumerate}
\end{cor}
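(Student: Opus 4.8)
The plan is to show that Corollary \ref{prop:ch} follows by combining Soergel's categorification theorem (the isomorphism $\ch\colon [\BC]\simto\HC$ sending $[B_s]\mapsto\un{H}_s$ and $[R(-1)]\mapsto v$) with Lemma \ref{lem:rank}. The point is that the class $[B_{\un{w}}]\in[\BC]$ is \emph{independent of the characteristic} of $\Bbbk$ — it equals the product $\un{H}_{s}\un{H}_{t}\cdots\un{H}_{u}$ in $\HC$, computed purely in the Hecke algebra — whereas the decomposition of $B_{\un{w}}$ into indecomposables, i.e. the multiset $\{B_x(d)\}$ of summands, does depend on $p$. So the equality of $p$-canonical and Kazhdan--Lusztig bases is exactly the statement that these decompositions agree with the characteristic-zero ones, and by Lemma \ref{lem:rank} the decomposition multiplicities are read off from the ranks of intersection forms.

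First I would make precise the ``master'' identity: for any expression $\un{w}$ in $S$, working in $[\BC]$ over an arbitrary field $\Bbbk$ of characteristic $p>2$, one has
\[
\ch([B_{\un{w}}]) \;=\; \sum_{x\in W}\Big(\sum_{m\in\ZM}\big(\rank I_{x,\un{w},m}\otimes_\ZM\Bbbk\big)\,v^{m}\Big)\,\p\un{H}_x,
\]
where the inner coefficient is the graded multiplicity of $B_x$ (with its shifts) in $B_{\un{w}}$, supplied by Lemma \ref{lem:rank}, and the left-hand side is a fixed element of $\HC$ not depending on $\Bbbk$. Writing $\p m_{x,\un{w}}(v):=\sum_m (\rank I_{x,\un{w},m}\otimes\Bbbk)\,v^m$, this reads $\un{H}_{s}\cdots\un{H}_{u} = \sum_x \p m_{x,\un{w}}(v)\,\p\un{H}_x$ inside $\HC$.

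Next, for (1) $\Rightarrow$ (2): if $\p\un{H}_x=\un{H}_x$ for all $x$, then the master identity over $\Bbbk$ and the master identity over $\QM$ (which gives the honest Kazhdan--Lusztig multiplicities) are two expansions of the \emph{same} Hecke-algebra element $\un{H}_{s}\cdots\un{H}_{u}$ in the \emph{same} basis $\{\un{H}_y\}$; by uniqueness of coefficients, $\p m_{x,\un{w}}=m^{\QM}_{x,\un{w}}$ for all $x$ and all expressions $\un{w}$, which is exactly the equality of graded ranks in (2). For (2) $\Rightarrow$ (1): I would argue by induction on the Bruhat order. Fix $x\in W$ and a reduced expression $\un{x}$ for it, and assume $\p\un{H}_y=\un{H}_y$ for all $y<x$. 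Apply the master identity to $\un{w}=\un{x}$; the top term (coefficient of $\p\un{H}_x$) has graded multiplicity $1$ in both characteristics — over $\QM$ because $B_x$ occurs once in $B_{\un{x}}$, over $\Bbbk$ by hypothesis (2) applied at $x$, $\un{x}$, $m=0$, since $\rank I_{x,\un{x},0}$ is $1$ in characteristic $0$ hence $1$ in characteristic $p$ — and by the inductive hypothesis all lower terms $\p\un{H}_y=\un{H}_y$ agree with their characteristic-zero counterparts; subtracting, $\p\un{H}_x - \un{H}_x$ is forced to be a combination of $\un{H}_y$ with $y<x$ whose coefficients are, by hypothesis (2) again, the same in both characteristics, hence zero. (One also needs the base case $x=\id$, where $\p\un{H}_{\id}=H_{\id}=\un{H}_{\id}$ trivially.)

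The main obstacle — really a bookkeeping subtlety rather than a deep point — is handling the gradings and the shifts $B_x(d)$ correctly: Lemma \ref{lem:rank} counts $B_x(d)$ for each individual shift $d$, so I must package these into the graded multiplicity polynomial $\p m_{x,\un{w}}(v)\in\ZM[v^{\pm1}]$ and check it is this polynomial, not just its value at $v=1$, that appears as the coefficient of $\p\un{H}_x$ under $\ch$; this is where the normalisation $\ch(R(-1))=v$ is used. A secondary point is that statement (2) quantifies over \emph{all} expressions whereas the definition of $\p\un{H}_x$ via $B_x$ only references reduced ones — but since every $B_{\un{w}}$ decomposes into the $B_x(d)$ regardless, and since non-reduced $\un{w}$ only give further instances of the same master identity, this causes no trouble; if anything the reduced expressions already suffice for the induction in (2) $\Rightarrow$ (1), and the non-reduced ones come along for free in (1) $\Rightarrow$ (2).
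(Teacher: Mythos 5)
Your proof is correct and takes essentially the same route as the paper: Lemma \ref{lem:rank} translates (2) into the statement that $B_{\un{w}}$ decomposes identically over $\QM$ and over $\Bbbk$, and Soergel's characteristic-zero theorem together with the characteristic-independence of $\ch([B_{\un{w}}]) = \un{H}_s\un{H}_t\cdots\un{H}_u$ makes that equivalent to (1). You merely spell out, via the graded multiplicity polynomial and a Bruhat-order induction, the bookkeeping that the paper's three-sentence proof leaves implicit.
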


\begin{proof}
Soergel's theorem \cite[Lemma 5]{SL} implies that the
indecomposable Soergel bimodules categorify the 
Kazhdan-Lusztig basis if $\Bbbk$ is of characteristic zero (see
  \cite{EW2} for an algebraic proof of this fact). 
Now Lemma \ref{lem:rank} says that (2) holds if and only if $B_{\un{w}}$ decomposes the same
way over $\QM$ as it does over $\Bbbk$. Hence (1) and (2) are
equivalent and (1) implies (3).

It remains to see that (3) implies (1). We show the contrapositive. So
assume that (1) is not satisfied, and let $w$ be of minimal
length such that $\p\un{H}_w \ne \un{H}_w$. For any $s \in S$ with $ws
< w$, $B_w$ is a summand of $B_{ws}B_s$. By our minimality assumption
$\ch(B_{ws}) = \un{H}_{ws}$ and hence
$\ch(B_{ws}B_s) = \un{H}_{ws}\un{H}_s = \sum g_x \un{H}_x$ for some $g_x \in
\ZM_{\ge 0}$. Hence $\ch(B_w) = \p\un{H}_w =\sum \p a_{x,w} \un{H}_x$ with $\p a_{x,w} \in
\ZM_{\ge 0}$. By Lemma \ref{lem:rank}, if $x < w$ is such
that $\p a_{x,w} \ne 0$ then the ranks of $I_{x,\un{w},0}\otimes_{\ZM}
\QM$ and $I_{x,\un{w},0}\otimes_{\ZM} \Bbbk$ differ, for any
reduced expression $\un{w}$ for $w$. Thus (3) $\Rightarrow$ (1).
\end{proof}

% \begin{remark}
%   In fact (1) and (2) are equivalent to $I_{x,\un{w},0} \otimes_{\ZM} \QM$ and
%   of $I_{x,\un{w},0} \otimes_{\ZM} \Bbbk$ having the same rank for all
%   reduced expressions $\un{w}$ and $x \in W$. (That is we only need to
%   check that the forms for degree zero summands have the
%   correct ranks.)
% \end{remark}

\begin{remark} The intersection form and the above
  proposition is one of the tools used by Fiebig to establish
  his bound \cite{F2}. 
\end{remark}

\section{Proof of the theorem} \label{sec:proof}

Let $W$ denote the symmetric group on $\{ 1, 2, \dots, a + n + b\}$
with Coxeter generators $S = \{ s_1, s_2, \dots,
s_{a+n+b-1}\}$ the simple transpositions. Given a subset $I \subset S$
let $W_I$ denote the corresponding standard parabolic subgroup and
$w_I$ its longest element. 
Consider the sets
\[
A = \{s_1,s_2, \dots, s_{a-1}\}, M = \{ s_{a+1}, \dots, s_{a+n-1}\}, B = \{ s_{a+n+1},
\dots, s_{a+n+b-1}\}.
\]
Then $W_A$ (resp. $W_M$, resp. $W_B$) is the subgroup of permutations
of $\{ 1, \dots, a\}$ (resp. $\{ a+1, \dots, a+n \}$,
resp. $\{a+n+1, \dots, a + n + b \}$).

We use the notation of \S \ref{sec:main} except we shift all indices
by $a$. That is we regard $S_n$ as embedded in $S_{a+n+b}$ as the
standard parabolic subgroup $W_M$. We rename $R =
\ZM[x_1, \dots, x_{a+n+b} ]$ and write $\alpha_i = x_i - x_{i+1}$ for
the simple root corresponding to $s_i$. Fix 
\begin{equation}
  \label{eq:kappa}
\kappa = \dd_{w_m} (x_{a+1}^{a_m}x_{a+n}^{b_m}  \dd_{w_{m-1}}
(x_{a+1}^{a_{m-1}}x_{a+n}^{b_{m-1}} 
\dots \dd_{w_1} (x_{a+1}^{a_1}x_{a+n}^{b_1}) \dots ))  
\end{equation}
which we assume is a non-zero integer. (Now $w_1, \dots, w_m \in
W_M$ and the fact that $\kappa$ is a non-zero integer implies
that $\sum \ell(w_i) = a + b$.)

We now perform some preliminary simplifications of
the right hand side of \eqref{eq:kappa}. 
By replacing
each $x_{a+1}^{a_i}x_{a+n}^{b_i}$ with 
  $x_{a+1}^{a_i}\partial_{\id{}}x_{a+n}^{b_i}$ we
  may  assume that for all
  $i$, either $a_i$ or $b_i$ is zero. Let $M' = M \setminus \{ s_{a+1},
  s_{a+n-1}\}$. If $w \in W_{M' \cup \{ s_{a+1} \}}$ then $\partial_w$
    commutes with the operator of multiplication with $x_{a+n}$. Thus
    if $a_i$ is zero then we may assume that $w_i$ is minimal in its
    coset $w_i W_{M' \cup \{ s_{a+1} \}}$. Similarly, if $b_i$ is zero
      then we may assume that $w_i$ is minimal in its coset $w_i W_{M'
        \cup \{ s_{a + n -1} \}}$. From now on we assume that
    the right hand side of \eqref{eq:kappa} has been simplified in
    this way. Finally, the minimal coset representatives of
  $W_M / W_{M' \cup \{ s_{a+n-1}\}}$ are the elements:
\[
\id,
  s_{a+1}, \; s_{a+2}s_{a+1}, \; \dots, \; s_{a+n-1}s_{a+n-2} \dots s_{a+2}s_{a+1}.
\]
Similarly, the minimal coset representatives of $W_M / W_{M' \cup \{ s_{a+1}\}}$ are the elements:
\[
\id,
  s_{a+n-1}, \; s_{a+n-2}s_{a+n-1}, \; \dots, \; s_{a+1}s_{a+2} \dots s_{a+n-2}s_{a+n-1}.
\]
Thus each $w_i$ belongs to the first (resp. second) list if $b_i = 0$
(resp. $a_i =0$).

Fix a reduced expression $\un{w}_M$ for $w_M$ and reduced expressions
$\un{w}_i$ for each $w_i$. (In fact, following the reductions of
  the previous paragraph each $w_i$ has a unique reduced
  expression.)
Let $\un{w}$ be the sequence
\[
\un{w} =  \un{w}_m\un{u}_m \un{v}_m\dots \un{w}_2\un{u}_2 \un{v}_2\un{w}_1\un{u}_1\un{v}_1\un{w}_M
\]
where 
\begin{gather*}
  \un{u}_1 = ( s_a\dots s_{a-a_1+1} ) \dots (s_a s_{a-1}) (s_a) \\
\un{u}_2 = (s_a\dots s_{a-a_1-a_2+1})  \dots
(s_a \dots s_{a-a_1-1})(s_a \dots s_{a-a_1})
\\
\vdots \\
\un{u}_m = (s_a  \dots s_1) \dots (s_a \dots s_{a-a_1-\dots-a_{m-1} - 1}) 
(s_a  \dots s_{a-a_1 - \dots - a_{m-1}} ) 
\end{gather*}
(subscripts fall by 1 within each parenthesis, and $s_a$ occurs $a_i$ times in $\un{u}_i$) and
\begin{gather*}
  \un{v}_1 = (s_{a+n} \dots s_{a+n+b_1-1}
 )  \dots (s_{a+n} s_{a+n+1}) (s_{a+n})\\
\un{v}_2 = (s_{a + n} \dots s_{a + n + b_1 + b_2 - 1} ) \dots (
s_{a+n} \dots s_{a+n+b_1+1}) (s_{a + n} \dots s_{a+n+b_1} )
 \\
\vdots \\
\un{v}_m = 
(s_{a+n}  \dots s_{a + n + b - 1}) \dots
(s_{a + n} \dots s_{a + n + b_1 + \dots + b_{m-1}}) 
\end{gather*}
(subscripts rise by 1 within each parenthesis, and $s_{a+n}$ occurs $b_i$
times in $\un{v}_i$).

\begin{remark}
  The sequence $\un{u}_m \dots \un{u}_2\un{u}_1$ (resp. $\un{v}_m
  \dots \un{v}_2\un{v}_1$) is a reduced expression for the longest
  element of $W_{A \cup \{ s_a \}}$ (resp. $W_{\{s_{a+n} \} \cup
    B}$). If we denote by $\un{u}'_i$ (resp. $\un{v}'_i$) the expression obtained
  from $\un{u}_i$ (resp. $\un{v}_i$) by deleting every occurrence of
  $s_a$ (resp. $s_{a+n}$) then $\un{u}'_m \dots \un{u}'_2\un{u}'_1$ (resp. $\un{v}'_m
  \dots \un{v}'_2\un{v}'_1$) is a reduced expression for the longest
  element of $W_A$ (resp. $W_B$).
\end{remark}

\begin{ex}
  We give a real-life example. We take $n = 4$ and consider the operator $F: f
  \mapsto 
  \dd_{23}(x_4^2(\dd_1(x_1 f)))$ on $\ZM[x_1, x_2,x_3, x_4]$ (we write
  $\dd_{23} := \dd_{2}\dd_3$). In the next section we will see that
  $F$ is a ``Fibonacci operator''; in particular
\[
\dd_1F^3(x_1) =
\dd_{123}(x_4^2\dd_1(x_1\dd_{23}(x_4^2\dd_1(x_1\dd_{23}(x_4^2\dd_1(x_1^2))))))=
3.
\]
In the notation of \S \ref{sec:main} we have $w_1 = w_3 = w_5 = s_1$,
$w_2 = w_4 = s_2s_3$, $w_6 = s_1s_2s_3$, $a_1 = 2$, $a_3 = a_5 = 1$,
$a_2 = a_4 = a_6 = 0$, $b_1 = b_3 = b_5 = 0$ and $b_2 = b_4 = b_6 =
2$. Hence $a = 4, b = 6$ and $ a + n + b = 14$. We can
depict $\un{w}$ as follows:
\[
\input{perm.tex}
\]
% (This example is somewhat special: for all $i$
% either $a_i$ or $b_i$ is zero, and hence either $\un{u}_i$ or
% $\un{v}_i$ is the empty sequence. The reader should have no
% difficulty adapting the above diagram to represent a general example,
% however see Remark \ref{rem:ab0}.)
\end{ex}

%Set $I = A \cup M \cup B$ and let $w_I$ be its longest element.

The rest of this section will be occupied with the proof of the
following theorem:

\begin{thm} \label{thm:main2}
The degree zero intersection form of $\un{w}$ at $w_{A \cup M \cup B}$ is the $1
\times 1$-matrix $((-1)^a
  \kappa)$.\end{thm}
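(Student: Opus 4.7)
The plan is to compute the $1\times 1$ intersection form at $w_I = w_{A \cup M \cup B}$ by identifying both Hom spaces with explicit subspaces of polynomials, and then recognizing the composition pairing as exactly the iterated polynomial operator that defines $C$. A first reduction uses the functor $c:\BC \to \CC$ and the commutative diagram of Section~\ref{sec:SM} to transport the computation to Soergel modules, where the object $D_{w_I}$ attached to the longest element of a standard parabolic is identified with (a shift of) the $W_I$-coinvariants. A second reduction uses the diagrammatic/polynomial presentation of \cite{EW} and the light-leaves description of morphism spaces in the quotient $\BC^{\ge w_I}$, which represents every map $B_{\un{w}} \to B_{w_I}$ (and its dual) by a polynomial expression built from divided difference operators and multiplications by simple roots.

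Next, I would parse the word $\un{w} = \un{w}_m\un{u}_m\un{v}_m \cdots \un{w}_1\un{u}_1\un{v}_1\un{w}_M$ from right to left and identify each block's contribution. The rightmost factor $\un{w}_M$ provides the canonical summand $B_{w_M} \hookrightarrow B_{w_I}$ and seeds the computation with the polynomial $1$. Each middle block $\un{w}_i$, which (after the normalization of Remark~\ref{rem:ab0}) lies in $W_M \subset W_I$, is pulled through the light-leaves construction to yield the Demazure operator $\dd_{w_i}$. The staircases $\un{u}_i = (s_a\cdots s_{a-a_1-\cdots-a_i+1})\cdots(s_a\cdots s_{a-a_1-\cdots-a_{i-1}})$ move $a_i$ strands from the $A$-block into the $M$-block across the boundary reflection $s_a$; a direct polynomial evaluation using the fact that the simple root $\alpha_a = x_a - x_{a+1}$ becomes $-x_{a+1}$ modulo the ideal that makes $x_1,\ldots,x_a$ act as in the $W_A$-coinvariants shows that this block contributes multiplication by $x_{a+1}^{a_i}$ in the polynomial model. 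By the mirror-symmetric analysis at the other boundary, $\un{v}_i$ contributes multiplication by $x_{a+n}^{b_i}$.

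Composing all these contributions, and invoking the degree balance $\sum \ell(w_i) = a + b$ to land in the one-dimensional top piece, yields exactly
\[
\dd_{w_m}\!\bigl(x_{a+1}^{a_m}x_{a+n}^{b_m}\dd_{w_{m-1}}(\cdots \dd_{w_1}(x_{a+1}^{a_1}x_{a+n}^{b_1})\cdots)\bigr) = C,
\]
up to a global sign depending on the choice of dual morphism. Since $C \ne 0$, both Hom spaces are one-dimensional, the multiplicity of $B_{w_I}$ in $B_{\un{w}}$ is one, and Lemma~\ref{lem:rank} combined with this computation identifies the intersection form as $(\pm C)$.

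The main obstacle is the rigorous identification of the staircase blocks $\un{u}_i, \un{v}_i$ with clean polynomial multiplications by $x_{a+1}^{a_i}$ and $x_{a+n}^{b_i}$. The light-leaves description allows for several distinct choices of morphism for each subword, and the diagrammatic cancellations needed to reduce the a priori polynomial contribution of a whole staircase to a single monomial are subtle; this is the point at which the nil-Hecke/canonical evaluation techniques developed in \cite{HW} play the key role, ensuring that the boundary reflections $s_a$ and $s_{a+n}$ pin down the variables $x_{a+1}$ and $x_{a+n}$ unambiguously and that the intermediate reflections inside $A$ and $B$ contribute only the correct powers.
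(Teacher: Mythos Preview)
Your outline has the right shape at the very end---ultimately the entry is computed via the nil Hecke ring techniques of \cite{HW}---but the argument as written has a genuine gap and a piece of circular reasoning.

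The circularity is in the sentence ``Since $C \ne 0$, both Hom spaces are one-dimensional.'' The dimension of $\Hom_{\ge w_I}(B_{w_I},B_{\un{w}})$ in degree zero is governed by the number of defect-zero subexpressions of $\un{w}$ expressing $w_I$, and this must be established \emph{before} you know what the entries of the intersection form are. In the paper this is exactly the content of the two lemmas preceding the proof: one first pins down that in any subexpression for $w_I$ the letters $s_a,s_{a+n}$ must carry $e_j=0$ and every letter in $A\cup B$ must carry $e_j=1$, and then a short defect count on the residual word $\un{w}_m\cdots\un{w}_1\un{w}_M$ forces the unique defect-zero subexpression. Only after this uniqueness is in hand does \cite[Theorem 5.1]{HW} apply to give a single nil Hecke coefficient as the intersection form. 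Your proposal skips this step entirely, so nothing prevents the form from being a larger matrix, and then ``the entry equals $\pm C$'' is not even a well-posed claim.

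The second gap is that your ``parsing'' of the staircases $\un{u}_i,\un{v}_i$ into the monomials $x_{a+1}^{a_i}$ and $x_{a+n}^{b_i}$ is asserted rather than proved. In the paper this is done \emph{inside the nil Hecke ring}: the $U0$ at each $s_a$ contributes a factor $\alpha_a$, the subsequent $U1$'s in $A$ contribute Demazure operators $D_{a-1},D_{a-2},\ldots$, and one must argue that after pushing everything past $D_{w_A}D_{w_B}$ the resulting products of roots $\prod(x_k-x_{a+1})$ may be replaced by $(-x_{a+1})^{a_i}$. This replacement is not automatic: it relies on a degree argument (Lemma~\ref{lem:D1}) showing that any $W_M$-invariant factor of positive degree kills the whole expression, so only the pure $x_{a+1}$-part survives. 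Your appeal to ``$x_1,\ldots,x_a$ act as in the $W_A$-coinvariants'' is the right intuition but is not what actually happens in the computation; the vanishing comes from $W_M$-invariance and degree, not from a coinvariant identification. Finally, the detour through the functor $c$ to Soergel modules in your first reduction is unnecessary and is not used in the paper's argument.
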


In the proof we will need the notion of subexpression and
defect together with the main result
of \cite{HW}.
 Fix a word $\un{y} = s_{i_1} \dots s_{i_m}$ in $S$. A
\emph{subexpression} of $\un{y}$ is a sequence $\un{e} = e_1 \dots e_m$ with
$e_i \in \{ 0, 1 \}$ for all $i$. We set $\un{y}^{\un{e}} :=
s_{i_1}^{e_1} \dots s_{i_m}^{e_m} \in W$. Any subexpression $\un{e}$
determines a sequence $y_0, y_1, \dots, y_m \in W$ via $y_0 := \id{}$, $y_j
:= s_{i_{m+1-j}}^{e_{m+1-j}}y_{j-1}$ for $1 \le i \le m$ (so $y_m =
\un{y}^{\un{e}}$). Given a subexpression $\un{e}$ we associate a
sequence $d_j \in \{ U, D \}$ (for \emph{U}p, \emph{D}own) via
\[
d_j := \begin{cases} U & \text{if $s_{i_j} y_{m-j}> y_{m-j}$,}\\
D & \text{if $s_{i_j}y_{m-j} < y_{m-j}$.} \end{cases}
\]
Usually we view $\un{e}$ as the decorated sequence $( d_1e_1, \dots,
d_me_m)$. The
\emph{defect} of $\un{e}$ is
\[
\df(\un{e}) := | \{ i \; | \; d_ie_i = U0 \}| - | \{ i \; | \; d_ie_i = D0 \}|.
\]

\begin{remark} \label{rem:opp}
  See \cite[\S 2.4]{EW} for examples and  motivation. We warn the reader that in this paper we work
  from right to left to define the defect, rather than from left to right as in
  \cite[\S 2.4]{EW} and \cite[\S 2.3]{HW}. This change of conventions is necessary to have
  the operators $\partial_i$ act on polynomials on the left. One may
  easily pass between the two possible choices via the symmetry on
  Soergel bimodules which interchanges left and right actions. (In the
diagrammatic language of \cite{EW} this corresponds to flipping
diagrams about the $y$-axis.)
\end{remark}

Recall that the nil Hecke ring $NH$ is defined to be the algebra generated
by $R$ and symbols $\delta_i$ for each $s_i \in S$, and subject
to the relation $\delta_i^2 = 0$ for all $s_i \in S$, the braid
relations and the nil Hecke relation
\[
\delta_i f = s_i(f) \delta_i + \partial_i (f) \quad \text{for all $s_i \in S$ and
$f \in R$.}
\]
As left $R$-modules $NH$ is free with basis $\{\delta_w \}_{w \in W}$,
where $\delta_w := \delta_{i_1} \dots
\delta_{i_k}$ for any reduced expression $w = s_{i_1} \dots s_{i_k}$. The
grading on $R$ extends to a grading on $NH$ with $\deg \delta_w = -2\ell(w)$
for all $w \in W$.

Equipped with this notation we can now give the proof.

\begin{lem} \label{lem:red}
  $\un{w}$ is reduced.
\end{lem}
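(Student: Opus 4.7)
The plan is to apply the standard wire-diagram criterion for reducedness in the symmetric group: an expression is reduced if and only if no two strands cross more than once in the associated braid diagram. An explicit picture of this diagram in a typical case is essentially what appears in the preceding example, and it visibly has no repeated crossings; the general case works identically.

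I would formalise this by induction on $i$, setting
\[
\un{w}^{(i)} := \un{w}_i \un{u}_i \un{v}_i \un{w}_{i-1} \cdots \un{w}_1 \un{u}_1 \un{v}_1 \un{w}_M
\]
and showing that each $\un{w}^{(i)}$ is reduced. The base case $\un{w}^{(0)} = \un{w}_M$ holds by hypothesis. For the inductive step the key invariant to maintain is the following: after applying $\un{w}^{(i-1)}$, the strands originally in positions $1, \dots, a - (a_1 + \cdots + a_{i-1})$ have not moved, and similarly for the untouched tail of the $B$-block. Granted this invariant, each ``arm'' of the staircase $\un{u}_i$ picks up a fresh strand from the right edge of the untouched $A$-block, drags it across the boundary $s_a$, and deposits it at position $a+1$, crossing precisely the strands currently occupying the intermediate positions. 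Every such crossing is new, since the active strand has not been involved in any previous crossing. A direct count gives
\[
|\un{u}_i| = a_i(a_1 + \cdots + a_{i-1}) + \binom{a_i+1}{2},
\]
exactly matching the number of new inversions produced; an analogous argument treats $\un{v}_i$. Finally $\un{w}_i \in W_M$ permutes only strands currently residing in the $M$-region, and since $\un{w}_i$ itself is reduced, it contributes $\ell(w_i)$ further fresh crossings.

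The main technical obstacle is the bookkeeping required to verify the invariant through each layer: one must track precisely which strands occupy the $M$-region at each stage and confirm that each new arm of $\un{u}_i$ only crosses strands it has never crossed before. This reduces to careful inspection of the explicit staircase structure of $\un{u}_i$ and $\un{v}_i$, together with the observation that previously pulled strands end up at the far left (resp.\ right) edge of the $M$-region and stay there, outside the range of the next staircase. Summing the length contributions across all layers yields
\[
|\un{w}| = \tbinom{n}{2} + \tbinom{a+1}{2} + \tbinom{b+1}{2} + a + b,
\]
all arising as distinct crossings, which proves that $\un{w}$ is reduced.
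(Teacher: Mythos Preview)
Your approach via wire diagrams is exactly what the paper has in mind (its proof is literally ``draw a picture''), but there is a genuine gap in your execution: you never use the hypothesis, stated just before the lemma, that each $w_i$ is of minimal length in its coset $w_iW_{M'}$. Without this hypothesis the statement is false. For instance, take $n=4$, $a=1$, $b=0$, $m=1$, $w_1=s_{a+2}$, $a_1=1$, $b_1=0$ (so $\sum\ell(w_i)=a+b$ holds). Then $\un{w}=s_{a+2}\,s_a\,\un{w}_M$, and the crossing produced by $s_{a+2}$ undoes a crossing already made inside $\un{w}_M$; one checks directly that $\ell(w)=6$ while $|\un{w}|=8$.

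The problematic step is the sentence ``since $\un{w}_i$ itself is reduced, it contributes $\ell(w_i)$ further fresh crossings.'' Reducedness of $\un{w}_i$ only says its own crossings are mutually distinct; it says nothing about whether those crossings repeat ones already created in $\un{w}^{(i-1)}$. The minimality of $w_i$ in $w_iW_{M'}$ is precisely what rules this out: it forces $w_i$ to preserve the relative order of the strands currently at positions $a+2,\dots,a+n-1$, so the only crossings $w_i$ can introduce involve the strands at positions $a+1$ or $a+n$, and these (when $a_i>0$, resp.\ $b_i>0$) are fresh arrivals that have not yet crossed anything inside the $M$-region. You will also need to handle the case $a_i=0$ or $b_i=0$ with a little care. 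Once you build this hypothesis into your inductive invariant, the rest of your outline goes through.
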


\begin{proof}
Let us fix an element $x \in W_Q$ where $Q = \{ s_p, s_{p+1}, \dots,
s_{q-1}, s_q \}$ (for some $p, q$ with $1 < p \le q < a + n + b -1$) and a 
reduced expression $\un{x}$ for $x$. Then for any $j$ the expressions
\[
s_j s_{j-1} \dots s_{p-1} \un{x} \quad \text{and} \quad
s_{j} s_{j+1} \dots s_{q+1} \un{x}
\]
are reduced. (For example, one can write a formula
for how the displayed elements act on $1,2, \dots, a + n + b$ in terms
of $x$, and verify that their lengths differ from $\ell(x)$ by $j-p+2$ (resp. $q -
j +2$) by counting inversions. It follows that the lengths of the
displayed expressions agree with the lengths of the underlying
elements, and thus they are reduced.)

From the definition of $\un{w}$ it follows that there exists a
sequence of expressions $\emptyset = \un{x}_0, \un{x}_1, \dots,
\un{x}_r = \un{w}$ such
that each $\un{x}_i$ is
obtained from $\un{x}_{i-1}$ by the procedure of the previous
paragraph. Thus $\un{w}$ is reduced as claimed. 
\end{proof}

 Write $\un{w} = s_{i_1} \dots s_{i_l}$.

\begin{lem} Any subexpression
  ${\un e}$ of $\un{w}$ with $\un{w}^{\un e} = w_{A \cup M \cup B}$ has $e_j =
  0$ if $s_{i_j} \in \{ s_a, s_{a+n} \}$ and $e_j = 1$ if $s_{i_j} \in
  A \cup B$.
\end{lem}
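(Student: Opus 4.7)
My plan is to combine a length-count with an analysis of block-preservation. First, using $\sum \ell(w_i) = a+b$ and the explicit staircase shape of each $\underline{u}_i$ and $\underline{v}_i$, one computes
\[
\sum_i |\underline{u}_i| = \binom{a+1}{2}, \qquad \sum_i |\underline{v}_i| = \binom{b+1}{2},
\]
so that $\ell(\underline{w}) = \binom{n}{2} + \binom{a+1}{2} + \binom{b+1}{2} + (a+b)$. Since $\ell(w_{A \cup M \cup B}) = \binom{a}{2} + \binom{n}{2} + \binom{b}{2}$, the difference is exactly $2(a+b)$, so $\underline{e}$ has at most $2(a+b)$ zeros, with equality iff the selected letters form a reduced expression for $w_{A\cup M\cup B}$.

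Next I would tally the letters of $\underline{w}$ by type: $a$ occurrences of $s_a$, $b$ occurrences of $s_{a+n}$, exactly $\binom{a}{2}$ letters in $A$ (all inside the $\underline{u}_i$'s), exactly $\binom{b}{2}$ in $B$ (all inside the $\underline{v}_i$'s), and $\binom{n}{2} + (a+b)$ in $M$. In the ``reduced'' case (equality in the length count), any reduced expression for $w_{A\cup M\cup B}$ uses only letters from $A\cup M\cup B$, so all $s_a$ and $s_{a+n}$ must be unselected. Since $W_A,W_M,W_B$ pairwise commute and $w_{A\cup M\cup B}=w_Aw_Mw_B$, the product then factors through $W_A\times W_M\times W_B$, and for the $W_A$-component to equal $w_A$ (of length $\binom{a}{2}$) all $\binom{a}{2}$ available $A$-letters must be selected; likewise for $B$. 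This handles the reduced case.

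The remaining case, and the main obstacle, is to rule out non-reduced subexpressions (strictly fewer than $2(a+b)$ zeros) with $\underline{w}^{\underline{e}}=w_{A\cup M\cup B}$, where some pair of selected letters cancels. The key observation is that $w_{A\cup M\cup B}$ preserves the block partition $\{1,\dots,a\}\sqcup\{a+1,\dots,a+n\}\sqcup\{a+n+1,\dots,a+n+b\}$, while $s_a$ and $s_{a+n}$ are precisely the simple generators crossing the first and second boundary respectively. Selecting one $s_a$ forces a companion selected $s_a$ elsewhere to ``undo'' that crossing; but within the rigid staircase shape of the $\underline{u}_i$'s the chain of $A$-letters between two selected $s_a$'s is essentially forced (each $\underline{u}_i$ is a reduced word for a minimal-length coset representative for $W_{\{s_a\}\cup A}$ in $S_{a+1}$), and I would show by a case check that any such ``paired'' configuration either has too few zeros to be consistent with the first paragraph or produces a product lying outside $W_{A\cup M\cup B}$. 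The same argument applies symmetrically to $s_{a+n}$ and $B$.

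The hard part is the final combinatorial check. The cleanest implementation I see is to track the one-line notation of $\underline{w}^{\underline{e}}$ as one reads $\underline{w}$ from right to left, using the rigid staircase shape to pin down exactly how (and why not) selected block-crossings can be balanced against one another without overshooting the zero-budget from the first paragraph.
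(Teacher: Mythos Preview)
Your length count and the reduced-case argument in paragraph two are correct and rather clean: once the selected word is a \emph{reduced} expression for $w_{A\cup M\cup B}$, it can only use generators in $A\cup M\cup B$, and then the letter tally forces all $\binom{a}{2}$ $A$-letters and all $\binom{b}{2}$ $B$-letters to be selected. This is a genuinely different route from the paper's.

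The gap is the non-reduced case, and it is not a detail. You cannot hope to show that every subexpression for $w_{A\cup M\cup B}$ is reduced: subexpressions satisfying the lemma's conclusion but with a non-reduced $M$-part may well exist, so your task is to show the conclusion holds for \emph{all} subexpressions, reduced or not. Your sketch (``a companion $s_a$ must undo the crossing; then a case check on the staircase \dots'') is not a proof. The difficulty is that between two occurrences of $s_a$ there are $M$-letters $\un{w}_i$ which can shuffle the crossed element arbitrarily inside $\{a+1,\dots,a+n\}$, so the ``pairing'' bookkeeping does not reduce to a finite check on the $\un{u}_i$'s alone, and your zero-budget inequality from paragraph one gives no purchase here.

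The paper avoids the dichotomy entirely by doing exactly what you propose in your last paragraph, but carried out element by element rather than on the whole one-line notation at once. Track the image of $1$: since $s_1$ occurs exactly once in $\un{w}$ (in the leftmost block $(s_as_{a-1}\dots s_1)$ of $\un{u}_m$) and $w_{A\cup M\cup B}(1)=a$, that $s_1$ must be selected; to its left only $s_2,\dots,s_{a-1},s_a$ and letters of $\un{w}_m\subset M$ remain, which forces $s_2,\dots,s_{a-1}$ selected and $s_a$ unselected. This pins the leftmost block to $(0,1,\dots,1)$. Now track $2$: the only remaining undetermined letters that can move it lie in the block $(s_a\dots s_2)$, and the same reasoning forces its pattern; and so on down the staircase. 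This argument is uniform over all subexpressions and makes the counting detour unnecessary. If you want to salvage your write-up, replace paragraphs one through three by a careful implementation of this tracking.
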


\begin{proof} Let ${\un e}$ denote a subexpression of $\un{w}$ with
  $\un{w}^{\un e} = w_{A \cup M \cup B}$.

Any expression $\un{y}$ for $w_A$ contains a subsequence of
the form $s_{a-1} s_{a-2} \dots s_1$ (think about what happens to $1
\in \{1, \dots, a + n + b\}$). In $\un{w}$, $s_1$ only occurs once. Left of
$s_1$ there is only one occurrence of $s_{a-1}$, $s_{a-2}$, etc. We
conclude that the restriction of ${\un e}$ to $(s_{a}s_{a-1} \dots s_2
s_1)$ in $\un{u}_i$ is equal to $(01\dots 11)$, where $i$ is the
maximal index with $\un{u}_i \ne \emptyset$. Now any expression for
$w_A$ starting in $s_{a-1} \dots s_1$ has to contain a subsequence
to the right
of the form $s_{a-1} \dots s_2$ (think about what happens to $2
\in \{1, \dots, a + n + b\}$). Continuing in this way we see that the
restriction of ${\un e}$ to each $\un{u}_j$ has the form
\[
(01 \dots 1) \dots (01 \dots 1) (01 \dots 1)
\]
(with the same bracketing as in the definition of each $\un{u}_i$).
Similar arguments apply to each $\un{v}_i$ and the result follows.
\end{proof}

\begin{lem}
There is a unique subexpression ${\un e}$ of $\un{w}$ such that
$\un{w}^{\un e} = w_{A \cup M \cup B}$ and ${\un e}$ has defect zero.
\end{lem}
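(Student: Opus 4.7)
The plan is to prove existence and uniqueness via a careful defect count. By the previous lemma, any such $\un e$ has $e_j=1$ on all $A$-letters and $B$-letters and $e_j=0$ on all $s_a$ and $s_{a+n}$ positions, so the only freedom is on the $M$-letters appearing in $\un w_M$ and the $\un w_i$; I need to show exactly one such choice gives both $\un w^{\un e}=w_{A\cup M\cup B}$ and $\df(\un e)=0$.

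First I would analyse the forced contributions to the defect. Because only $A$-, $B$- and $M$-letters can carry $e=1$, the running state $y_k$ always lies in $W_A\cdot W_M\cdot W_B$; since these three parabolic subgroups act on disjoint indices they commute, and $y_k$ decomposes uniquely as $y_k^A y_k^M y_k^B$. A short computation via this decomposition yields $y_k^{-1}(a)\in\{1,\dots,a\}$ and $y_k^{-1}(a+1)\in\{a+1,\dots,a+n\}$, so every $s_a$-position is an up-step, and likewise for $s_{a+n}$. Hence the forced-zero positions contribute exactly $+(a+b)$ to $\df(\un e)$. A direct count shows that the total number of $A$-letters in $\un w$ is $\binom{a}{2}=\ell(w_A)$, and they multiply to $w_A$, so together they form a reduced expression (read right-to-left) and each is $U1$, contributing $0$; similarly for $B$-letters.

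The problem thus reduces to the $M$-letters. Let $u,d,z_U,z_D$ be the number of up-$1$, down-$1$, up-$0$, down-$0$ $M$-letters. Three linear relations hold: $u-d=\ell(w_M)$ (the $M$-component of the final state is $w_M$), $u+d+z_U+z_D=\ell(w_M)+a+b$ (the total number of $M$-letters), and $z_U-z_D=-(a+b)$ (defect zero combined with the $s_a,s_{a+n}$ contribution computed above). Solving this linear system forces $d=z_U=0$; hence each $M$-letter is either up-$1$ or down-$0$. This "greedy" rule determines $\un e$ uniquely as we process $\un w$ right to left.

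It remains to verify existence by running the rule. On $\un w_M$, which read right-to-left is a reduced expression for $w_M$, every letter is an up-step and so $e=1$, bringing $y_k^M$ up to $w_M$. On each subsequent $\un w_i$ the state $y_k^M=w_M$ is the longest element of $W_M$, so every further $M$-letter is a down-step and carries $e=0$, leaving $y_k^M$ unchanged. The final product is therefore $w_Aw_Mw_B=w_{A\cup M\cup B}$ and the defect is zero. The main obstacle is the linear system above: it works out only because the combinatorial recipe defining $\un w$ provides exactly $a+b$ excess $M$-letters, precisely what is needed to cancel the $a+b$ forced $U0$ contributions from the $s_a$ and $s_{a+n}$ positions.
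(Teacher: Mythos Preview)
Your proof is correct and follows essentially the same route as the paper's. The paper reduces, exactly as you do, to the $M$-letters after noting that the $s_a,s_{a+n}$ positions are all $U0$ and the $A$-, $B$-letters all $U1$; it then simply asserts that the only subexpression of $\un w_m\cdots\un w_1\un w_M$ expressing $w_M$ with defect $-(a+b)$ is $(0\cdots0)\cdots(0\cdots0)(1\cdots1)$, where you instead spell this out via the linear system $u-d=\ell(w_M)$, $u+d+z_U+z_D=\ell(w_M)+a+b$, $z_U-z_D=-(a+b)$ forcing $d=z_U=0$.
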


\begin{proof}
  By the previous lemma we must have $e_j = 0$ (resp. 1) if $s_{i_j}
  \in \{ s_a, s_{a+n}\}$ (resp. $s_{i_j} \in A \cup B$). Because each
  $e_j$ with $s_{i_j}\in \{ s_a, s_{a+n}\}$ is $U0$ and because $W_{A \cup B}$ and $W_M$ commute we
  only have to understand subexpressions $\un{e}'$ of
\[
\un{w}' = \un{w}_m \un{w}_{m-1} \dots \un{w}_1 \un{w}_M
\]
of defect $-(a+b) = -\sum_{i = 1}^m \ell(w_i)$ such that
$(\un{w}')^{\un{e}'} = w_M$. Now $\ell(\un{w}') = \ell(w_M) + a + b$
and hence any subexpression $\un{e}'$ of $\un{w}'$ with
$(\un{w}')^{\un{e}'} = w_M$ has at most $a + b$ zeroes. Moreover, if
$\un{e}'$ has defect $-a -b$ then $\un{e}'$ must have exactly $a + b$ zeroes,
all of which have to be $D0$.
Now, using that $\un{w}_M$ is reduced, the only subexpression of $\un{w}'$ fulfilling these requirements is
\[
(0 \dots 0)(0 \dots 0) \dots (0 \dots 0)(1 \dots 1). \qedhere
\]
\end{proof}

\begin{proof}[Proof of Theorem \ref{thm:main2}]
  We conclude from the previous two lemmas and their proofs that the
  unique defect zero subexpression ${\un e}$ of $\un{w}$ with
  $\un{w}^{\un e} = w_Aw_Bw_M$ is
  \[ {\un e} = {\un e}_m{\un f}_m{\un g}_m \dots {\un e}_2{\un
    f}_2{\un g}_2{\un e}_1{\un f}_1{\un g}_1{\un e}_0
  \]
  where ${\un e}_0$ (resp. ${\un e}_i$) is a subexpression of
  $\un{w}_M$ (resp. $\un{w}_i$) given by 
  \begin{gather*}
    {\un e} _0 = (U1, U1, \dots, U1) \quad ( \text{resp.} \quad
    {\un e}_i = (D0, D0, \dots, D0) )
  \end{gather*}
  and ${\un f_i}$ (resp. ${\un g_i}$) is a subexpression of $\un{u}_i$
  (resp. $\un{v}_i$) given by
  \[
  (U0, U1, \dots, U1)(U0, U1, \dots, U1) \dots (U0, U1,
  \dots, U1).
  \]
  (we use the same bracketing as in the definition of $\un{u}_i$ and
  $\un{v}_i$).

Hence the intersection form of $\un{w}$ at $w_{A \cup M \cup B}$ for
degree $d = 0$ is indeed a $1 \times 1$ matrix.
  Applying \cite[Theorem 5.1]{HW} its unique entry is given by the coefficient of
  $\delta_{w_{A \cup M \cup B}} =
  \delta_{w_A}\delta_{w_M}\delta_{w_B}$ in
\footnote{
Actually, as noted in Remark \ref{rem:opp}, here we use a
``right to left'' convention, rather than the ``left to right''
convention of \cite{HW}. One can check that \cite[Theorem 5.1]{HW}
holds in either
convention. Alternatively one can proceed as follows.
Let $\un{w}^r = s_{i_\ell} \dots
s_{i_1}$ denote the reversed sequence, and let $\iota : NH \to NH$
denote the anti-involution with $\iota(f) = f$ for $f \in R$ and
$\iota(\delta_x) = \delta_{x^{-1}}$ for $x \in W$. Then \cite[Theorem
5.1]{HW} implies that the intersection form of $\un{w}^r$ at $w_{A \cup M \cup B}$ is
the $1 \times 1$ matrix given by the coefficient of
$\delta_{w_A}\delta_{w_M}\delta_{w_B}$ in $\iota(E)$. This implies the
statement because the intersection forms of $\un{w}$ and $\un{w}^r$ at
$w_{A \cup M \cup B}$ agree.}
  \[
  E := E_mF_mG_m \dots E_2F_2G_2E_1F_1G_1E_0
  \]
  where $E_0 = \delta_{w_{M}}$, $E_i = \delta_{w_i}$ for $1 \le i \le m$ and the
  $F_i, G_i$ are given by:
  \begin{align*}
    F_1 &= ( \a_a \delta_{a-1}\dots \delta_{a-a_1+1} ) \dots (\a_a \delta_{a-1}) (\a_a) \\
    F_2 &= (\a_a \delta_{a-1}\dots \delta_{a-a_1-a_2+1}) \dots (\a_a \delta_{a-1}
    \dots \delta_{a-a_1-1})(\a_a\delta_{a-1} \dots \delta_{a-a_1})
    \\
    & \quad \vdots \\
    F_m &= (\a_a\delta_{a-1} \dots \delta_1) \dots (\a_a\delta_{a-1} \dots
    \delta_{a-a_1-\dots-a_{m-1} - 1})
    (\a_a\delta_{a-1}  \dots \delta_{a-a_1 - \dots - a_{m-1}} ) \\
    G_1 &= (\a_{a+n}\delta_{a+n+1} \dots \delta_{a+n+b_1-1}
    )  \dots (\a_{a+n}\delta_{a+n+1}) (\a_{a+n})\\
    % \un{v}_2 &= (\a_{a + n}\delta_{a+n+1} \dots \delta_{a + n + b_1 + b_2 - 1}
    % ) \dots ( \a_{a+n}\delta_{a+n+1} \dots \delta_{a+n+b_1+1}) (\a_{a+n} \dots
    % \delta_{a+n+b_1} )
    % \\
    & \quad \vdots \\
    G_m &= (\a_{a+n}\delta_{a+n+1} \dots \delta_{a + n + b - 1}) \dots (\a_{a +
      n}\delta_{a+n+1} \dots \delta_{a + n + b_1 + \dots + b_{m-1}}) .
  \end{align*}
 
 In $NH$ we can write $E = \sum_{y \in W_{A \cup M \cup B}} f_y
 \delta_y$. After noting that
\[
\deg E = 2(-\ell(w_A) - \ell(w_B) + a + b - \sum \ell(w_i) - \ell(w_M)) =
-2\ell(w_{A \cup M \cup B})
\]
we see that in fact $E = \kappa' \delta_{w_{A \cup M \cup B}}$
  for some $\kappa' \in \ZM$. In particular, whenever we apply a nil Hecke
  relation $f \delta_i = \delta_i s_i(f) + \partial_i(f)$ with $s_i \in A \cup B$
  to reduce $E$ the term involving $\partial_i(f)$ does not
  contribute. (It would lead to a term which is zero for degree
  reasons.) Hence
  we can write
  \begin{gather*}
    E = \delta_{w_A}\delta_{w_B}(\delta_{w_m}\g_m\d_m) \dots (\delta_{w_2}
    \g_2\d_2)(\delta_{w_1} \g_1\d_1) \delta_{w_M}
  \end{gather*}
  where each $\g_i$ (resp. $\d_i$) is a product of $a_i$ (resp. $b_i$)
  roots of the form $x_k - x_{a+1}$ with $k < a+1$ (resp. $x_{a+n} -
  x_k$ for $k > a + n$). Hence we have
  \begin{gather*}
    E = \delta_{w_Aw_B}(\delta_{w_m} (-x_{a+1})^{a_m}x_{a+n}^{b_m})
  \dots (\delta_{w_2} (-x_{a+1})^{a_2}x_{a+n}^{b_2}) 
(\delta_{w_1} (-x_{a+1})^{a_1}x_{a+n}^{b_1}) \delta_{w_M}\\
 = (-1)^a \kappa \cdot \delta_{w_{A \cup M \cup B}}
  \end{gather*}
where the first (resp. second) equality follows from Lemma
\ref{lem:D1} (resp. \ref{lem:D2}) below. The theorem follows.
\end{proof}

\begin{lem} \label{lem:D1}
  Let $w_1, \dots, w_m \in W_M$ and $\z_1, \dots, \z_m \in R$. Assume
  that for some $1 \le i \le m$ we can write $\z_i = \z_i^M\z_i'$ for some
    $W_M$-invariant $\z_i^M$ of positive degree, and that $\sum \deg \z_i = \sum \ell(w_i)$.
Then $\delta_{w_m} \z_m \dots \delta_{w_1} \z_1 \delta_{w_M} = 0$.
\end{lem}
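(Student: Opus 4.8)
The plan is to push the $W_M$-invariant factor $\z_i^M$ to the right, past all the operators $D_{x_{i-1}},\dots,D_{x_1}$ and all the polynomials lying to its right, until it sits immediately to the left of $D_{w_M}$, and then to exploit that $w_M$ is the longest element of $W_M$.

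First I would record two standard facts in $NH$. (i) If $g\in R$ is $W_M$-invariant and $x\in W_M$, then $D_x g = g D_x$: induct on $\ell(x)$, writing $D_x=D_sD_y$ with $s\in M$, and use the nil Hecke relation together with $s(g)=g$ and $\partial_s(g)=0$. (ii) For $v,w\in W$ one has $D_vD_w=D_{vw}$ when $\ell(vw)=\ell(v)+\ell(w)$, and $D_vD_w=0$ otherwise; this is the usual product rule in the nil Hecke ring, a consequence of $D_s^2=0$, the braid relations, and the freeness of $\{D_w\}$. Note also that any product of elements of $R$ and of operators $D_s$ with $s\in M$ lies in $\bigoplus_{w\in W_M}RD_w$ (by (ii), and because a Bruhat interval below an element of $W_M$ stays inside $W_M$).

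Now write $\z_i=\z_i^M\z_i'$. Using commutativity of $R$ and fact (i) repeatedly, slide $\z_i^M$ to the right across $D_{x_{i-1}},\z_{i-1},\dots,D_{x_1},\z_1$ to obtain
\[
D_{x_m}\z_m\cdots D_{x_1}\z_1 D_{w_M}\;=\;P\cdot\z_i^M D_{w_M},\qquad
P:=D_{x_m}\z_m\cdots D_{x_{i+1}}\z_{i+1}D_{x_i}\z_i'D_{x_{i-1}}\z_{i-1}\cdots D_{x_1}\z_1 .
\]
By the remark above, $P\in\bigoplus_{w\in W_M}RD_w$; and since $\deg\z_i'=\deg\z_i-\deg\z_i^M$ and $\sum_j\deg\z_j=\sum_j\ell(x_j)$ by hypothesis, $P$ is homogeneous of degree $-\deg\z_i^M$, which is strictly negative because $\z_i^M$ has positive degree. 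Writing $P=\sum_{w\in W_M}p_wD_w$ with $p_w\in R$ and recalling $\deg D_w=-\ell(w)$, homogeneity forces $\deg p_w=\ell(w)-\deg\z_i^M$; in particular $p_e=0$, being a polynomial of negative degree.

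Finally, pull $\z_i^M$ back out to the left using (i):
\[
P\cdot\z_i^M D_{w_M}=\sum_{w\in W_M}p_wD_w\z_i^M D_{w_M}=\z_i^M\sum_{w\in W_M}p_w\,D_wD_{w_M}.
\]
For $w\in W_M$ with $w\neq e$ we have $ww_M\in W_M$, hence $\ell(ww_M)\le\ell(w_M)<\ell(w)+\ell(w_M)$, so $D_wD_{w_M}=0$ by (ii); and the surviving term $w=e$ has coefficient $p_e=0$. Thus the whole sum vanishes, which proves the lemma. I expect the only real obstacle to be careful bookkeeping: one must use that $\z_i^M$ is invariant under all of $W_M$ (not merely under a few simple reflections) to commute it past each $D_{x_j}$, and one must keep $\z_i'$ in place of $\z_i$ inside $P$ so that the degree count produces exactly $-\deg\z_i^M$; everything else is routine nil Hecke calculus.
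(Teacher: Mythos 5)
Your proof is correct and follows essentially the same strategy as the paper: commute the $W_M$-invariant factor $\z_i^M$ out of the word and kill the remainder by a degree count in $\bigoplus_{w\in W_M}RD_w$. The paper's version is slightly more compact (it observes directly that $D_{x_m}\z_m\cdots D_{x_i}\z_i'\cdots D_{x_1}\z_1 D_{w_M}$ lies in $\bigoplus_{w\in W_M}RD_w$ in degree $<-\ell(w_M)$ and hence vanishes, then reinserts $\z_i^M$), whereas you expand $P=\sum p_wD_w$ and dispatch the $w\ne e$ terms with $D_wD_{w_M}=0$ and the $w=e$ term with $\deg p_e<0$; the two arguments are the same in substance.
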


\begin{proof}
  We have $\delta_{w_m} \z_m \dots \delta_{w_i}\z_i{'}  \dots \delta_{w_1} \z_1
  \delta_{w_M} \in \bigoplus_{ y \in W_M} R \delta_y$, and hence
  \begin{gather*}
    \delta_{w_m} \z_m \dots \delta_{w_i}\z_i{'}  \dots \delta_{w_1} \z_1 \delta_{w_M} = 0
  \end{gather*}
because it is of degree $< -2\ell(w_M)$. As $\z_i^M$ is $W_M$-invariant:
\[
0 = \z_i^M(\delta_{w_m} \z_m \dots \delta_{w_i}\z_i{'}  \dots \delta_{w_1} \z_1
\delta_{w_M}) = \delta_{w_m} \z_m \dots \delta_{w_1} \z_1 \delta_{w_M}. \qedhere
\]
\end{proof}

\begin{lem} \label{lem:D2} With $w_i, a_i, b_i, \kappa$ as above 
  we have:
\[ (\delta_{w_m} x_{a+1}^{a_m}x_{a+n}^{b_m})( \delta_{w_{m-1}}
  x_{a+1}^{a_{m-1}}x_{a+n}^{b_{m-1}}) \dots (\delta_{w_1}
  x_{a+1}^{a_1}x_{a+n}^{b_1})\delta_{w_M} = \kappa \cdot \delta_{w_M}.\]
\end{lem}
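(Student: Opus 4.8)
The plan is to compute the product inside the nil Hecke ring $NH$ and then extract the relevant coefficient using the standard polynomial representation of $NH$. Write
$$Q := (D_{w_m} x_{a+1}^{a_m}x_{a+n}^{b_m})(D_{w_{m-1}} x_{a+1}^{a_{m-1}}x_{a+n}^{b_{m-1}})\cdots(D_{w_1} x_{a+1}^{a_1}x_{a+n}^{b_1}),$$
so that the left-hand side of the lemma is $Q D_{w_M}$. Since every $w_i$ lies in $W_M$, the element $Q$ belongs to the subalgebra $\bigoplus_{v\in W_M} R D_v$ of $NH$ (that this is a subalgebra, free over $R$ on the $D_v$ with $v\in W_M$, is immediate from the defining relations). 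Hence we may write $Q = \sum_{v\in W_M} f_v D_v$ with $f_v\in R$. For $v\in W_M$ with $v\neq\id$ we have $\ell(vw_M)=\ell(w_M)-\ell(v)<\ell(v)+\ell(w_M)$, so $D_v D_{w_M}=0$ (by the braid relations together with $D_i^2=0$); therefore $Q D_{w_M} = f_{\id}\, D_{w_M}$, and it remains only to identify $f_{\id}$ with $C$.

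To compute $f_{\id}$, I would use the standard action of $NH$ on $R$ in which $f\in R$ acts by multiplication and each $D_i$ acts by $\partial_i$: this is a well-defined algebra action, the one compatibility needing a check being that the nil Hecke relation $D_i f = s_i(f)D_i+\partial_i(f)$ goes over to the twisted Leibniz rule $\partial_i(fg)=\partial_i(f)g+s_i(f)\partial_i(g)$. Under this action $D_v$ acts by $\partial_v$, and $\partial_v(1)=0$ whenever $v\neq\id$ while $\partial_{\id}(1)=1$. Applying $Q$ to $1\in R$ therefore gives, on the one hand, $Q\cdot 1=\sum_{v\in W_M} f_v\,\partial_v(1)=f_{\id}$; on the other hand, evaluating the product $Q$ on $1$ one factor at a time, starting from the rightmost factor, reproduces exactly the nested expression
$$Q\cdot 1=\partial_{w_m}\big(x_{a+1}^{a_m}x_{a+n}^{b_m}\,\partial_{w_{m-1}}\big(\cdots\,\partial_{w_1}(x_{a+1}^{a_1}x_{a+n}^{b_1})\cdots\big)\big)=C.$$
Comparing the two computations gives $f_{\id}=C$, hence $Q D_{w_M}=C\cdot D_{w_M}$, as claimed. (The hypothesis $\sum\ell(w_i)=a+b$ enters only implicitly: it forces $\deg Q=0$, consistent with $f_{\id}$ being the integer $C$, but is not needed for the identity $f_{\id}=C$ itself.)

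This is essentially a bookkeeping argument, so I do not expect a serious obstacle; the points requiring care are the two structural facts used in the first step — that products of the $D_{w_i}$ with polynomials stay inside $\bigoplus_{v\in W_M} R D_v$, and that $D_v D_{w_M}=0$ for $v\in W_M\setminus\{\id\}$. If one prefers to avoid the polynomial representation altogether, the equality $f_{\id}=C$ can instead be obtained by induction on $m$, at each stage pushing the polynomial factors to the right past $D_{w_M}$ via the nil Hecke relation and discarding every term of degree below $-\ell(w_M)$ (such terms lie in $\bigoplus_{v\in W_M} R D_v$ and hence vanish). This is exactly the mechanism already used in the proof of Lemma \ref{lem:D1}, so the present lemma can be presented as a direct elaboration of it.
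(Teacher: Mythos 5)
Your proof is correct and follows essentially the same route as the paper's: expand the product in the basis $\{D_v\}_{v\in W_M}$, apply the polynomial representation to $1\in R$ to identify the coefficient of $D_{\id}$ as $C$, and use $D_v D_{w_M}=0$ for $\id\neq v\in W_M$ to conclude. Your remark that the degree hypothesis is only needed to conclude $f_{\id}$ is a constant (rather than for the identity $f_{\id}=C$ itself) is a nice clarification of the paper's phrase ``for degree reasons,'' but the substance of the argument is identical.
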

\begin{proof}It is well known that $\delta_i \mapsto \partial_i, f \mapsto (f \cdot)$ makes $R$ into an $NH$-module. 
  In $NH$ we can write
\begin{gather*} \label{eq:exp}
(\delta_{w_m} x_{a+1}^{a_m}x_{a+n}^{b_m})( \delta_{w_{m-1}}
  x_{a+1}^{a_{m-1}}x_{a+n}^{b_{m-1}}) \dots (\delta_{w_1}
  x_{a+1}^{a_1}x_{a+n}^{b_1})  = K + \sum_{\id \ne w \in W_M} f_w\delta_w
\end{gather*}
where $K \in \ZM$ for degree reasons.
By applying this identity to $1 \in R$ we deduce that $K = \kappa$. The
lemma now follows because if $w \in W_M$ then $\delta_w \delta_{w_M} = 0$
unless $w = \id$.
\end{proof}

\section{(Counter)-examples} \label{sec:ex}

We use the notation of \S \ref{sec:Schubert} and write $\dd_{12} :=
\dd_1\dd_2$, $X_1 := X_{s_1}, X_{12} := X_{s_1s_2}$ etc.

\subsection{$n < 4$:} One checks easily using \eqref{eq:dem} and 
\eqref{eq:Chevalley} that for $n = 2,3$ one can only
obtain $\kappa = \pm 1$.

\subsection{$n = 4$:} \label{sec:ex4}
Using \eqref{eq:dem} and \eqref{eq:Chevalley} we see that in $C$ we have
\begin{equation}
  \label{eq:schub}
X_1 = x_1 \qquad \text{and} \qquad  X_3 = -x_4.  
\end{equation}
Consider the (degree zero) operator $F : C \to C$ given by
\[
F : h \mapsto \dd_{23}(x_4^2(\dd_1(x_1h))).
\]
Using \eqref{eq:dem} and \eqref{eq:Chevalley} one checks that $F$
preserves the submodule $\ZM X_1 \oplus \ZM X_3$ and in the basis
$X_1, X_3$ is given by 
\[
F = \left ( \begin{array}{cc} 1 & 1 \\ 1 & 0 \end{array} \right ).
\]
This matrix determines the Fibonacci recursion! Hence for $i \ge 1$ we have
\[
F^i(x_1) = F_{i+1}X_{1} + F_iX_{3}
\]
where $F_1 = 1$, $F_2 = 1$, $F_3 =
2$, $F_4 = 3$ etc. denote the Fibonacci numbers. In particular,
\[
\dd_1(F^i(x_1)) = F_{i+1}.
\]
We conclude from the main theorem that any prime dividing the Fibonacci number $F_{i+1}$
occurs as torsion in $\SL_{3i+5}$.
By Carmichael's theorem \cite{Car}
 the first $n \gg 1$ Fibonacci numbers have at least
$n$ distinct prime factors. By the prime number theorem we conclude that
the torsion in $\SL_n$ grows at least as fast as some constant times $n
\log n$. Hence no linear bound is sufficient for Lusztig's
conjecture.

It is a well-known conjecture that infinitely many
Fibonacci numbers are prime. By the above results, this conjecture
would immediately imply that the torsion in $\SL_n$ grows
exponentially in $n$. Unfortunately, little seems to be 
known about the rate of growth of prime factors of Fibonacci numbers.

In the appendix we work with different operators in order to establish
exponential growth of torsion. If $U_l$ (resp. $U_u$) denotes the operator $h
\mapsto \dd_{21}(x_1^2(\dd_1(x_1h)))$ (resp. $h \mapsto
\dd_{23}(x_4^2(\dd_3(x_4h)))$) then $U_l$ and $U_u$ preserve the
submodule $\ZM X_1 \oplus \ZM X_3$ and in the basis $X_1, X_3$ are given by 
\[
U_l = \left ( \begin{array}{cc} 1 & 0 \\ 1 & 1 \end{array} \right ),
\qquad U_u = \left ( \begin{array}{cc} -1 & -1 \\ 0 & -1 \end{array} \right ).
\]
It follows from our main theorem that any prime dividing any matrix coefficient of
any word of length $\ell$ in the generators $\mattwo 1101$ and $\mattwo 1011
$ occurs as torsion in $\SL_{3\ell + 5}$. Indeed, given
  any word $\omega_1 \omega_2 \dots \omega_r$ in the
  operators $U_l$ and $U_u$ we may obtain all four coefficients (up to
  sign) of the corresponding product of the matrices $\mattwo 1101$ and $\mattwo 1011
$ as $\partial_{i} (\omega_1 ( \dots (\omega_r(x_{j})) \dots ))$
for $i \in \{1, 3\}$ and $j \in \{ 1, 4 \}$ (use \eqref{eq:schub}
and the fact that $\partial_1(X_1) = 1 = \partial_3(X_3)$).

\subsection{$n = 5$:} \label{sec:ex5}
In the following table we list some examples of $p$ torsion in
$\SL_N$ found using $n = 5$. The entries in the list were
found by random computer searches:
\[
\begin{array}{cccccccccccccccccc}
N &  14 & 17 & 20 & 22 &  25 & 30  & 40      & 50         & 70  & 100\\
p &    3 &   7 & 13 & 23 &  53  & 197& 2\;237 & 34\;183 & 4\;060\;219
& 470\;858\;183
\end{array}
\]
(These entries were found as follows. Consider the
eight degree zero operators:
\[
\dd_{4321}x_1^4, \dd_{321}x_1^3, \dd_{21}x_1^2, \dd_1x_1,
\dd_{1234}x_5^4, \dd_{234}x_5^3, \dd_{34}x_5^2, \dd_4x_5.
\]
It is not difficult to calculate the matrices of these operators
acting on any homogenous component of $C$ in the Schubert
basis. The above entries were obtained as prime factors of
coefficients obtained by repeated application of these operators
to $x_1^3$ and $x_1^2x_5 \in C^6$.)

\section{Lusztig conjecture} \label{sec:lusztig}

This section consists of connections and complements to
\cite{Soe}, with which we assume the reader is familiar. In keeping
with the setting of this paper, we work with $G = \SL_n$ throughout, however analogous
statements are true (with the same proofs) for any connected reductive
group.

% Let $\Bbbk$ denote an algebraically closed field of characteristic
% $p > n$. For simplicity we assume that $G = \SL_n$ over $\Bbbk$.

As in \S\ref{sec:SM} we assume in this and the following
  section that $p > n$.
Let $\OC$ denote the ``regular subquotient around the Steinberg
weight'' as defined in \cite[\S 2.3]{Soe}. We denote by $\Delta(x),
P(x)$ the standard and projective objects in $\OC$ and by $\theta_s :
\OC \to \OC$
 for $s \in S$ the translation functor \cite[\S 2.5]{Soe}. Let $[\OC]$
denote the Grothendieck group of $\OC$ and
\[
\a : [\OC] \simto \ZM[W]
\]
the isomorphism with $\a([\Delta(x)]) = x$ for all $x \in W$ ($\a$ is 
denoted $A$ in \cite[\S 2.10]{Soe}). As observed in \cite{Soe},
Lusztig's conjecture implies that
\[
\a(P(x)) = {\un{H}_x}_{|v = 1} \qquad \text{for all $x \in W$.}
\]

\begin{remark}
  This observation should be compared with a much earlier theorem of
  Jantzen \cite[Anhang, Corollar]{JaMo} matching multiplicities of
  simple modules in Weyl modules in sufficiently large characteristic $p$ and
  multiplicities of simple modules in Verma modules in characteristic
  $0$. This observation, together with Jantzen's
  calculations in rank 2, were the main ingredients that led to the
  formulation of the Lusztig conjecture.
\end{remark}

\begin{prop}
We have  $\a(P(x)) = {\p\un{H}_x}_{|v = 1}$. In particular, if
$\p\un{H}_x \ne \un{H}_x$ with $p > n$ then Lusztig's conjecture fails for $\SL_n$ in
characteristic $p$.
\end{prop}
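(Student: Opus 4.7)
The plan is to deduce the first assertion from the character formula for Soergel modules established in \S\ref{sec:SM}. The key input is (the modular analog of) Soergel's structure theorem for $\OC$ from \cite{Soe}, which supplies an exact functor $\VG : \OC \to f\CC$ sending the indecomposable projective $P(x)$ to (a shift of) $fD_x$, sending $\Delta(e)$ to the trivial Soergel module, and intertwining each translation functor $\theta_s$ with tensoring by $fD_s$.

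Granting this, I would first check that $\VG$ identifies $\a$ with $\b$. Since $[\OC]$ is generated over $\ZM$ by repeatedly applying the translation functors $\theta_s$ to $[\Delta(e)]$, and $\a$ sends $[\Delta(e)] \mapsto e$, the intertwining property combined with the analogous statement for $\b$ (visible from the commutative diagram at the end of \S\ref{sec:SM}) forces $\a = \b \circ [\VG]$. Applying this to $P(x)$ and using \eqref{eq:beta} yields
\[
\a([P(x)]) = \b([fD_x]) = {\p\un{H}_x}_{|v=1},
\]
which is the first assertion.

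For the ``in particular'' part, suppose Lusztig's conjecture holds for $\SL_n$ in characteristic $p > n$ (so $p \ge h$, since $h = n$ for $\SL_n$). Then as observed in the excerpt one has $\a([P(x)]) = {\un{H}_x}_{|v=1}$, and combining this with what we just proved gives ${\p\un{H}_x}_{|v=1} = {\un{H}_x}_{|v=1}$ in $\ZM[W]$. Write $\p\un{H}_x = \sum_y {}^pa_{y,x}\un{H}_y$ with ${}^pa_{x,x} = 1$ and the ${}^pa_{y,x}$ of non-negative coefficients (see the remark after \eqref{eq:pcan}). Since $\{{\un{H}_y}_{|v=1}\}_{y \in W}$ is a $\ZM$-basis of $\ZM[W]$ (the transition matrix from $\{H_y\}$ is unitriangular), we conclude ${}^pa_{y,x}(1) = 0$ for $y \ne x$, and by non-negativity of coefficients this forces ${}^pa_{y,x} = 0$. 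Hence $\p\un{H}_x = \un{H}_x$, contradicting the hypothesis.

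The main obstacle I anticipate is setting up the correct form of Soergel's structure theorem in positive characteristic and matching normalizations (grading shifts and the directions of tensor products) so that $\VG$ is genuinely compatible with the functor $c : \BC \to \CC$ from \S\ref{sec:SM}; once that foundational equivalence is in place, everything else reduces to the two displayed computations above.
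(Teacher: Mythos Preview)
Your proposal is correct and follows essentially the same route as the paper: both use Soergel's functor $\mathbb{V}$ (your $\VG$) together with $\mathbb{V}P(x) \cong fD_x$ to reduce to the identity $\b([fD_x]) = {\p\un{H}_x}_{|v=1}$ from \S\ref{sec:SM}, verifying compatibility of $\a$ and $\b$ on the Bott-Samelson/translation-functor generators. Your treatment of the ``in particular'' clause (via positivity of the ${}^pa_{y,x}$ and linear independence of the $\{\un{H}_y|_{v=1}\}$) is more explicit than the paper's, which leaves that step to the reader; one cosmetic slip is that $fD_x$ is already ungraded, so no shift is needed.
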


\begin{remark}
 Recall that $\p a_{y,x}, \p h_{y,x} \in \ZM_{\ge 0}[v^{\pm 1}]$ (see \S \ref{sec:SB}). In particular:
 \[ \p\un{H}_x = \un{H}_x \Leftrightarrow {\p\un{H}_x}_{|v = 1} =
 {\un{H}_x}_{|v = 1}. \]
\end{remark}

\begin{proof} 
  Let $p\OC$ denote the full subcategory of projective objects in
  $\OC$, and $[p\OC]$ its split Grothendieck group. Because $\OC$ has
  finite homological dimension, the map $[p\OC] \to [\OC]$ induced by
  the inclusion is an isomorphism. Recall the commutative diagram
\begin{gather*}
  \begin{tikzpicture}[xscale=2,yscale=1.5]
    \node (o) at (0,1) {$[p\OC]$};
    \node (fc) at (1,1) {$[f\CC]$};
 %   \node (c) at (2,1) {$[\CC]$};
 %   \node (d) at (3,1) {$[\DC]$};
\node (b1) at (0,0) {$\ZM S_n$};
\node at (0.5,0) {$=$};
\node (b2) at (1,0) {$\ZM S_n$};
%\node (b3) at (2,0) {$\HC$};
%\node at (2.5,0) {$=$};
%\node (b4) at (3,0) {$\HC$};
\draw[->] (o) to node[above] {$\mathbb{V}$} (fc);
%\draw[->] (c) to node[above] {$f$} (fc);
%\draw[->] (d) to node[above] {$d$} (c);
\draw[->] (o) to node[left] {$\a$} (b1);
\draw[->] (fc) to node[left] {$\b'$} (b2);
%\draw[->] (c) to node[right] {$\ch$} (b3);
%\draw[->] (d) to node[right] {$\ch$} (b4);
%\draw[->] (b3) to node[below] {$1 \mapsfrom v$} (b2);
  \end{tikzpicture}
\end{gather*}
from \cite[\S 2.11]{Soe} (see \S\ref{sec:SM} for the definition of
$f\CC$).

We claim that $\b'$ above agrees with the $\b$
defined in \S\ref{sec:SM}. If $\un{w} = st
\dots u$ we have, by \cite[Theorem 2.6.2]{Soe}:
\[
\mathbb{V}(\theta_u \dots \theta_t\theta_s M_{\id}) \cong fD_{\un{w}}.
\]
Thus, by \cite[\S 2.5 and \S 2.10]{Soe}:
\[
\b'([fD_{\un{w}}]) = \a( \theta_u \dots \theta_t\theta_s M_{\id}) =
(1+s)(1+t) \dots (1+u).
\]
By the commutativity of the diagram
in \S\ref{sec:SM}:
\[
\b([fD_{\un{w}}]) = \ch([B_sB_t \dots B_u])_{|v=1} = (\un{H}_s \un{H}_t \dots
\un{H}_u)_{|v=1} = (1+s)(1+t) \dots (1+u).
\]
Hence $\b = \b'$ as claimed, as $[f\CC]$ is generated by
$[fD_{\un{w}}]$ over all expressions $\un{w}$.

Now we are done: by \cite[Theorem 2.8.2]{Soe} we have
$\mathbb{V}P(x) = fD_x$ and 
the proposition follows from \eqref{eq:beta}.
\end{proof}

\section{James conjecture} \label{sec:james}

In this section we explain why the results of the previous section
yield counter-examples to the James conjecture \cite{James} on the
decomposition numbers of Schur algebras and the symmetric group.

Fix positive integers $N$ and $r$. Let
$\Lambda^+(N,r)$ denote the set of partitions of $r$ into at most $N$
parts; that is, sequences $(\lambda_1, \dots, \lambda_N)$ such that
$\lambda_1 \ge \dots \ge \lambda_N \ge 0$ with $r = \sum
\lambda_i$.  Then $\Lambda^+(N,r)$ is a partially ordered set with
respect to the dominance order $\le$.

Let $S(N,r)$ denote the Schur algebra over $\ZM$ (see e.g. \cite{Green}). Its category of representations is
equivalent to the category of polynomial representations of the group
scheme $GL_N$ of fixed degree $r$. Fix a field $\Bbbk$ of characteristic $p > N$ and let
$S_\Bbbk(N,r)$ denote the Schur algebra over $\Bbbk$.  The category $\Rep S_\Bbbk(N,r)$ of finitely
generated $S_\Bbbk (N,r)$-modules is a highest weight category with simple
modules indexed by $\Lambda^+(N,r)$. Given
$\lambda \in \L^+(N,r)$ we denote by $L(\l)$ (resp. $\Delta(\l)$,
$\nabla(\l)$, $P(\l)$, $T(\l)$) the simple
(resp. standard, costandard, indecomposable projective, indecomposable
tilting) module indexed by $\l$.
% Taking the character of a
% module gives us an identification of the Grothendieck group of
% $S(n,r)$ with symmetric polynomials in $n$-variables of fixed degree
% $r$. Under this identification the classe of $\Delta(\l)$ and $\nabla(\l)$ are
% mapped to the Schur function $s_\l$. 

Let $S_q(N,r)$ denote the $q$-Schur algebra and $S_\e(N,r)$ its
specialisation at a fixed primitive $p^{th}$-root of unity $\e \in
\CM$ (see e.g. \cite{Donkin}). Then the category of finitely generated $S_\e(N,r)$-modules
is highest weight. As above we write $L_\e(\lambda)$
(resp. $\Delta_\e(\lambda)$ etc.) for the simple (resp. standard etc.) module corresponding to
$\l \in \L^+(N,r)$. Given a module $M$ for $S_\e(N,r)$ we may choose a
stable $\ZM[\e]$-lattice and reduce to obtain a module over
$S_\Bbbk(N,r)$. In this way we obtain the 
decomposition map on Grothendieck groups
\[
d : [\Rep S_\e(N,r)] \to [\Rep S_\Bbbk(N,r)].
\]
One has $d([\nabla_\e(\l)]) = [\nabla(\l)]$. The James conjecture
\cite{James} predicts that
\begin{equation} \label{eq:james}
d([L_\e(\lambda)]) = [L(\lambda)]
\end{equation} if $p > \sqrt{r}$.\footnote{A stronger version requires
that $p$ be larger than the weight of $\lambda$. It reduces to
the condition $p > \sqrt{r}$ for the principal block, which
will be the only case considered below.}
%The weight is crude measure of the complexity of a block.
%}

Let $\rho = (N-1, \dots, 1, 0) \in \Lambda^+( N, {N \choose 2})$ and let
$\st := (p-1)\rho$ denote the ``Steinberg weight''. Let $S_N$ denote
the symmetric group of $N$ letters, acting by permutation on
$\ZM^N$. Lusztig's quantum character formula gives (see \eqref{eq:pcan} for notation)
\begin{equation} \label{qlusztig}
[\nabla_\e(\st + x\rho) : L_\e(\st + y\rho)] = h_{x,y}(1).
\end{equation}
as modules for $S_\e(N, p{N \choose 2})$.
(Actually, Lusztig's quantum character formula gives the multiplicity for the
quantum group of $\mathfrak{sl}_N$ specialised at $\e$ in terms of an
affine Kazhdan-Lusztig polynomial. The translation of his formula to
yield the above multiplicity is standard but technical. Alternatively,
one can appeal to \cite{AJS} and the $p \gg 0$ version of
\eqref{eq:SoergelMult} below.)

Similarly, \cite[Theorem 1.2.2]{Soe} gives (again see \eqref{eq:pcan} for notation)
\begin{equation} \label{eq:SoergelMult}
[\nabla(\st + x\rho) : L(\st + y\rho)] = \p h_{x,y}(1)
\end{equation}
as modules for $S_\Bbbk(N, p{N \choose 2})$. 
(Actually, Soergel's result gives this multiplicity for rational
modules for $\SL_N(\Bbbk)$. The translation to $GL_N(\Bbbk)$ and hence to
modules for the Schur algebra is standard.)

We conclude that whenever
$\p\un{H}_x \ne \un{H}_x$ for some $x$ the characters of 
the simple modules for $S_\e(N, p{N \choose 2})$ and $S_\Bbbk(N, p{N
  \choose 2})$ are different, because the simple and costandard modules
both give bases for the Grothendieck group. In particular, there
exists $\l$ such that $d([L_\e(\lambda)]) \ne [L(\lambda)]$.
Hence any $p$ appearing on the table in Section \ref{sec:ex} with $p > { N \choose 2}$
contradicts the James conjecture for  $S(N,
p{ N \choose 2})$.

\begin{remark}
A straightforward computation in $[\Rep S_\e(N,r)]$ and $[\Rep S_\Bbbk(N,r)]$
shows that 
\[
d([L_\e(\st + x\rho)]) = \sum a_{x,y}(1) [L(\st + y\rho)]
\]
and so the $a_{x,y}$ evaluated at 1 give part of
James's ``adjustment matrix''.
\end{remark}

However, amongst weights of the form $\st + x\rho$ for $x \in S_N$ 
only $\st + w_0 \rho$ is $p$-restricted ($w_0$ denotes the longest
element of $S_N$). Hence the above non-trivial decomposition numbers
are invisible to the symmetric group, as all simples corresponding
to non $p$-restricted weights are killed by the
Schur functor.

To get counter-examples in the symmetric group we can use the Ringel
self-duality of the Schur algebra and modular category $\OC$. (I thank Joe Chuang for explaining this to me.)
Given any $N' \ge N$ we have an obvious embedding $\L^+(N,
r) \hookrightarrow \L^+(N', r)$ obtained by appending 0's to the
partition. There is a quotient functor $f : \Rep S(N',r) \to
\Rep S(N,r)$ which preserves simple, standard, costandard modules
and indecomposable tilting modules corresponding
to $\l$ in $\L^+(N, r) \subset \L^+(N',r)$ (see \cite[\S 6.5]{Green}
and \cite[\S A4.5]{Donkin}).
% Hence in $\Rep S_\e(N',p{N \choose 2})$ (resp. $\Rep S_\Bbbk(N',p{N \choose 2})$) we have the
% identities:
% \begin{gather*}
% h_{x,y}(1) = [\nabla_\e(\st + x\rho) : L_\e(\st + y\rho)] = (P_\e(\st + y\rho) : \D_\e(\st + x\rho))\\
% \p h_{x,y}(1) = [\nabla(\st + x\rho) : L(\st + y\rho)] =(P(\st + y\rho) : \D(\st + x\rho))
% \end{gather*}
% Here $(P_\e(\st + y\rho) : \D_\e(\st + x\rho))$ denotes the multiplicity of $\D_\e(\st +
% x\rho)$ in a standard filtration on $P_\e(\st + y\rho)$ and similarly
% for $(P(\st + y\rho) : \D(\st + x\rho))$. The second equality is
% ``BGG-reciprocity''.

Now consider a variant of the subquotient around the Steinberg weight
discussed in the previous section. Consider the Serre subquotient $\OC := \AC / \NC$ of $\Rep S_\Bbbk(N,p{N
  \choose 2})$ where $\AC$ 
is the Serre subcategory generated by simple modules $L(\lambda)$
such that $\lambda \le st + \rho$ and $\lambda$ lies in the same
block as $st + \rho$, and $\NC$ denotes the Serre subcategory
generated by those simples $L(\lambda) \in \AC$ which are not of the
form $L(\st + x\rho)$ for some $x \in S_N$. The definition of $\OC_\e$
is obtained by replacing $L(\lambda)$ by $L_\e(\lambda)$ in the
definition of $\OC$.

Let us denote the images of $L(\st + x\rho)$, $\Delta(\st + x\rho)$,
etc. in $\OC$ as $L(x)$, $\Delta(x)$, etc. and similarly for
$\OC_\e$. Then $\OC$ is a highest weight category with simple,
standard, etc. and tilting objects $L(x)$, $\Delta(x)$, etc., and similarly for $\OC_\e$.
It is known that both categories are Ringel self-dual. (The proof of this fact seems not to
be explicit in the literature. However a proof may be obtained by
adapting ideas of \cite{BBM}. One shows that one has a braid group
action on $D^b(\OC)$ (resp. $D^b(\OC_\e)$) and a lift of the longest
element interchanges a projective and tilting generator.) Applying
BGG reciprocity then Ringel self-duality for $\OC$ and $\OC_\e$ we obtain:
\begin{gather*} 
 h_{x,y}(1) = [\nabla_\e(x) : L_\e(y)] = (P_\e(y):\Delta_\e(x)) =
(T_\e(yw_0):\nabla_\e(xw_0)),  \\
%= (T_\e(st+yw_0\rho):\nabla_\e(st+xw_0\rho)) \\=
%(P_\e((st+yw_0\rho)'):\nabla_\e((st+xw_0\rho)')).
 \p h_{x,y}(1) = [\nabla (x) : L (y)] = (P(y):\Delta(x)) =
(T (yw_0):\nabla (xw_0)).
\end{gather*}
Applying Ringel self-duality of $S(p{N \choose 2},
p{N\choose 2})$ \cite{DonkinTilting} and of $S_\e(p{N \choose 2},
p{N\choose 2})$ \cite{Donkin} 
we have:
\begin{gather*}
h_{x,y}(1) =
(T_\e(st+yw_0\rho):\nabla_\e(st+xw_0\rho)) =
(P_\e((st+yw_0\rho)'):\Delta_\e((st+xw_0\rho)')), \\
\p h_{x,y}(1) =
(T(st+yw_0\rho):\nabla (st+xw_0\rho)) =
(P((st+yw_0\rho)'):\Delta ((st+xw_0\rho)')).
\end{gather*}
Finally, again by BGG reciprocity
\begin{gather*}
h_{x,y}(1) = 
[\nabla_\e((st+xw_0\rho)') : L_\e((st + yw_0\rho)') ],\\
\p h_{x,y}(1) = 
[\nabla((st+xw_0\rho)') : L((st + yw_0\rho)') ].
\end{gather*}
The partitions $(\st + xw_0\rho)'$ and $(\st + yw_0\rho)'$ are
$p$-restricted. Hence, after applying the Schur functor the first (resp. second) number can be
interpreted as a decomposition number for the Hecke algebra
specialised at $\e$ (resp. the symmetric group in characteristic
$p$). It follows that the results of the previous section also 
produce counter-examples for the symmetric group.

\begin{remark}
  Consulting the table of counter-examples in Section \ref{sec:ex} we
  see that the
  smallest counter-example produced by the above methods occurs in $S_{N'}$ with
\[ N' = p{N \choose 2} = 2237{40 \choose 2} = 1\;744\;860.\]
The size of this number is a relic of our
method (in particular the fact that we cannot say anything about $p$-restricted
weights). It is an important question as to where the first counter-examples
occur.
\end{remark}

%\newpage
\appendix
\section{Exponential Growth of Torsion} \label{app}

\centerline{ \emph{by Alex Kontorovich, Peter J. McNamara and Geordie Williamson.}
\footnote{Kontorovich is partially supported by
an NSF CAREER grant DMS-1455705  and an Alfred P. Sloan Research
Fellowship.}
}
\hspace{1cm}

\subsection{Statement of the theorem}

Let 
\begin{equation}\label{eq:GamDef}
\G\ :=\left \langle \mattwo 1101,\mattwo 1011 \right \rangle ^{+}
\end{equation}
be the sub-{\it semi}-group of $\SL(2,\Z)$ generated (freely) by the matrices displayed. 
For a matrix $\g\in\G$, let $\ell(\g)$ be its wordlength in the generators of $\G$. 
In the main body of the paper, the third-named author proves that any prime $p$ dividing any coefficient $\g_{ij}$ of any matrix 
$$
\g=\mattwo{\g_{11}}{\g_{12}}{\g_{21}}{\g_{22}}\ \in\ \G
$$
occurs as torsion in $\SL_{3\ell+5}$, where
$\ell=\ell(\g)$ is the wordlength (see \S \ref{sec:ex4}). The purpose of this appendix is to
show the existence of  exponentially large (relative to wordlength)
prime divisors of matrix coefficients in $\G$, thus giving
exponentially large counterexamples to the expected
  bounds in Lusztig's conjecture. % that the growth is at most linear.
\\

In fact, the stated purpose can be accomplished by an almost\footnote{For the application to be immediate, $\G$ would need to be a Zariski-dense {\it group} and not just a semi-group; minor modifications are needed to handle this case.} direct application of the Affine Sieve \cite{BourgainGamburdSarnak2006, BourgainGamburdSarnak2010, SalehiSarnak2013}; see also, e.g., \cite{Kontorovich2014}. It turns out that one can do much more using recent 
%results
progress
 on ``local-global'' problems in ``thin orbits'' (see, e.g., the discussion in \cite{Kontorovich2013}); namely, one can produce not just prime divisors but actual primes in the entries of $\G$, and moreover give explicit estimates for their exponential growth rates (which are far superior compared to those which would come from an Affine Sieve analysis). Our main result is the following

\begin{thm}\label{thm:appmain}
There are absolute constants $\tau>0$ and $c>1$ so that, for all $L$
large, there exists $\g\in\G$ of wordlength $\ell(\g)\le L$ and
top-left entry $\g_{11}=p$ prime with $p>\tau c^{L}$. In fact, there are many primes arising this way:
\begin{equation}\label{eq:main}
\#\left\{
%\text{primes }
p>\tau c^{L}
\ : \
\exists\g\in\G\text{ with }\ell(\g)\le L\text{ and }\g_{11}=p
\right\}
\  \gg \
{c^{L}\over L}.
\end{equation}
The implied constant above is absolute and effective.
\end{thm}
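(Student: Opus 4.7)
The plan is to translate the problem into a density/sieve question for a thin arithmetic set, apply the recent Bourgain-Kontorovich density theorem, and then extract primes via a combinatorial sieve. First, any $\g\in\G$ can be uniquely written as a product $\g = g_1^{a_1}g_2^{b_1}g_1^{a_2}g_2^{b_2}\cdots$, where $g_1,g_2$ denote the two generators in \eqref{eq:GamDef} and $a_i,b_i\ge 1$; the wordlength satisfies $\ell(\g) = \sum(a_i+b_i)$. A direct matrix computation identifies the top-left entry $\g_{11}$ with a continuant, equivalently with the denominator of the finite continued fraction $[0;b_k,a_k,\ldots,b_1,a_1]$. In this way the candidate set $\{\g_{11} : \g\in\G,\ \ell(\g)\le L\}$ is precisely a set of continued-fraction denominators, constrained by the sum of the partial quotients.

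Next, restrict to the sub-semi-group $\G_A\subset\G$ obtained by requiring all partial quotients $a_i,b_i$ to be bounded by a fixed (large) constant $A$. For $\ell(\g)\le L$ in $\G_A$, the top-left entry lies in $[1,N]$ with $N\asymp c^L$ for some $c = c(A) > 1$ governed by the largest eigenvalue of the transfer operator associated with the corresponding Gauss-type shift. The main density theorem of \cite{BourgainKontorovich2014} (proved in connection with Zaremba's conjecture) then guarantees that, for $A$ sufficiently large, a positive proportion of integers $n\in [N/2,N]$ arise as such denominators. This produces a set $\mathcal{S}_L\subset [1,N]$ of size $|\mathcal{S}_L|\gg N$, each element of which occurs as $\g_{11}$ for some $\g\in\G$ with $\ell(\g)\le L$.

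Finally, apply a standard combinatorial sieve (for instance a Rosser-Iwaniec or beta sieve) to $\mathcal{S}_L$. The input required is a power-savings level of distribution for $\mathcal{S}_L$ in residue classes modulo small $q$; this rests on the bilinear/dispersion estimates in the Bourgain-Kontorovich framework together with super-strong approximation (expansion) for $\G_A$ modulo $q$. Since $|\mathcal{S}_L|\gg N$ and the sieve dimension is bounded, one concludes that $\gg N/\log N \gg c^L/L$ elements of $\mathcal{S}_L$ are prime, establishing both conclusions of Theorem~\ref{thm:appmain}. The hardest step is this last one: merely knowing $|\mathcal{S}_L|\gg N$ does not suffice, and the deep content of \cite{BourgainKontorovich2014} is precisely the quantitative equidistribution of $\mathcal{S}_L$ modulo small moduli, via arithmetic combinatorics and expander-graph methods, which is what enables the sieve to converge on primes.
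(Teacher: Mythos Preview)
Your proposal overcomplicates the argument and, in its final step, contains a genuine gap. You invoke a ``standard combinatorial sieve (Rosser--Iwaniec or beta sieve)'' together with a level of distribution to conclude that $\gg N/\log N$ elements of $\mathcal{S}_L$ are \emph{prime}. But a combinatorial sieve with level-of-distribution input alone cannot produce primes: this is the parity problem. At best one obtains almost-primes (numbers with a bounded number of prime factors). Breaking parity requires genuinely different input---Type~II/bilinear sum estimates of a specific shape---and you have not indicated what bilinear structure on $\mathcal{S}_L$ you would exploit or why the Bourgain--Kontorovich machinery supplies it in the form a prime-detecting sieve needs.

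The paper's proof avoids all of this. You have understated what \cite{BourgainKontorovich2014} proves: it is not merely that a \emph{positive proportion} of integers in $[1,N]$ are continuant denominators with partial quotients $\le A$, but that \emph{all but} $O(N e^{-\mathfrak{c}\sqrt{\log N}})$ of them are. Since this exceptional count is $o(N/\log N)$, the Prime Number Theorem alone already forces $\sim (1-\theta)N/\log N$ primes in $(\theta N,N]$ to lie in the represented set---no sieve required. The remainder of the paper's argument is bookkeeping: observe $\Gamma_A\subset\Gamma$ with $\ell(\gamma)\le 2A\,\ell_A(\gamma)$, bound $\|\gamma\|_\infty$ below by a Fibonacci number to convert the archimedean cutoff $N$ into a wordlength cutoff, and read off $c=\varphi^{1/A}$. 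So the ``hardest step'' you identify is in fact absent from the actual proof; the full strength of the density theorem makes primes automatic.
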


Throughout this appendix, $p$ always denotes a prime. The notation
$f(L) \gg g(L)$ means that $g = O(f)$, i.e. 
$|g(L)| \le M |f(L)|$ for a fixed $M > 0$ and all large $L$. In this
case, $M$
is the \emph{implied constant} referred to above.

% Throughout $p$ always denotes a prime, and $f(L) \gg
% g(L)$ means that $g = O(f)$.

Exact estimates for $\tau$ and $c$ can be readily determined; the value
coming from our proof is
$c=(\frac{1+\sqrt{5}}{2})^{1/5}\approx1.101\dots$ and we can take $\tau =
5/7$; see \eqref{eq:cIs}.
It turns out that \thmref{thm:appmain} is a nearly immediate
consequence of recent advances towards Zaremba's conjecture on
continued fractions with bounded partial quotients. 

Given $A\ge1$, let $\G_{A}$ be the sub-semigroup:
\begin{equation}\label{eq:GAdef}
\G_{A}\ := \
\left\langle
\mattwo a110\cdot\mattwo b110
 \ :\
1\le a,b\le A
\>^{+}
.
\end{equation}
(In fact, $\Gamma_A$ is freely generated by the displayed elements.)

\begin{thm}[Bourgain-Kontorovich
  \cite{BourgainKontorovich2014}]\label{thm:BK} There exists $A_0$ and
  an absolute constant $\fc < \infty$ so that, for $A \ge A_0$ and all $N$ large,
$$
\#\{
n\le N \ : \
\exists\g\in\G_{A}\text{ with }\g_{11}=n
\}
\ = \
N
\left(
1+
O\left(
e^{-\fc\sqrt{\log N}}
\right)
\right)
,
$$
where  the implied constant and $\fc>0$  are both absolute.
\end{thm}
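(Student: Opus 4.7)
The plan is to deduce Theorem \ref{thm:appmain} from Theorem \ref{thm:BK} by exhibiting an efficient semigroup embedding $\Gamma_A\hookrightarrow\Gamma$ together with an elementary Fibonacci lower bound on matrix entries. First I would record the identity
\[
U^aL^b \;=\; \mattwo{1}{a}{0}{1}\mattwo{1}{0}{b}{1} \;=\; \mattwo{1+ab}{a}{b}{1} \;=\; \mattwo{a}{1}{1}{0}\mattwo{b}{1}{1}{0},
\]
where $U=\mattwos{1}{1}{0}{1}$ and $L=\mattwos{1}{0}{1}{1}$ generate $\Gamma$. Consequently, any $\gamma\in\Gamma_A$ expressed as a product of $k$ generators $\mattwos{a_i}{1}{1}{0}\mattwos{b_i}{1}{1}{0}$ with $1\le a_i,b_i\le A$ is simultaneously the $\Gamma$-word $U^{a_1}L^{b_1}\cdots U^{a_k}L^{b_k}$, whose wordlength is exactly $\ell(\gamma)=\sum_{i=1}^{k}(a_i+b_i)\le 2Ak$.

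Second, to control $k$ in terms of $n:=\gamma_{11}$, I would observe that each factor $\mattwos{a_ib_i+1}{a_i}{b_i}{1}$ dominates $\mattwos{2}{1}{1}{1}$ entrywise, and entrywise domination is preserved under multiplication of nonnegative matrices. Combined with $\mattwos{2}{1}{1}{1}=\mattwos{1}{1}{1}{0}^{2}$, this gives
\[
n \;=\; \gamma_{11} \;\ge\; F_{2k+1} \;\ge\; \phi^{2k}/\sqrt{5},
\]
where $\phi=(1+\sqrt{5})/2$ and $F_{\bullet}$ is the Fibonacci sequence. Hence $k\le(\log n)/(2\log\phi)+O(1)$, and combining with the first step,
\[
\ell(\gamma) \;\le\; \frac{A}{\log\phi}\log n + O(A).
\]

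Finally, I would fix $A=A_0$ from Theorem \ref{thm:BK}, set $c:=\phi^{1/A}$, and pick constants $0<a<B$. For $L$ large and $N:=Bc^{L}$, Theorem \ref{thm:BK} gives $\#\{n\le N:n=\gamma_{11}\text{ for some }\gamma\in\Gamma_A\}=N+O(Ne^{-\fc\sqrt{\log N}})$, so the set of exceptional integers in the window $(ac^{L},Bc^{L}]$ has size $o(c^{L})$. The prime number theorem provides $\sim(B-a)c^{L}/(L\log c)$ primes in that window, of which $\gg c^{L}/L$ therefore survive as top-left entries of some $\gamma\in\Gamma_A$. For each such prime $p\le Bc^{L}$, the estimate of the previous paragraph yields
\[
\ell(\gamma) \;\le\; \frac{A\log c}{\log\phi}\,L + O(1) \;=\; L + O(1),
\]
and the additive $O(1)$ slack is absorbed by taking $c$ slightly below $\phi^{1/A}$ (or by decreasing $a$). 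This produces the claimed $\gg c^{L}/L$ primes $p>ac^{L}$ realized as $\gamma_{11}$ with $\ell(\gamma)\le L$.

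The only deep input is Theorem \ref{thm:BK}, used as a black box; everything else is a clean linking argument. The \emph{main obstacle} here is essentially bookkeeping: handling the additive $O(1)$ losses in the wordlength bound so that $\ell(\gamma)\le L$ holds on the nose, and choosing $B$ large enough that the subtraction of the Bourgain--Kontorovich exceptions still leaves a positive proportion of primes in the window. The specific numerics $c=\phi^{1/5}$, $a=5/7$ quoted in the introduction presumably correspond to an explicit effective value of $A_0$ (and a matched $B$) in the Bourgain--Kontorovich theorem.
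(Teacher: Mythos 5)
You have not proved the statement in question. The statement is Theorem \ref{thm:BK} itself --- the Bourgain--Kontorovich density theorem for top-left entries of $\Gamma_A$, i.e.\ the positive-proportion (indeed density-one) result toward Zaremba's conjecture. That is a deep theorem occupying a long Annals paper \cite{BourgainKontorovich2014}, built on the thermodynamic/transfer-operator description of the relevant Cantor sets, a major-arc/minor-arc circle method analysis over the semigroup orbit, and expansion ($\ell^2$-flattening and spectral gap) estimates; the paper you are reading cites it as external input and gives no proof. Your proposal instead \emph{assumes} Theorem \ref{thm:BK} as a black box and deduces Theorem \ref{thm:appmain} from it, so as a proof of the stated theorem it is vacuous/circular: none of the analytic content of Bourgain--Kontorovich is addressed.

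For what it is worth, the linking argument you give is essentially the paper's own proof of Theorem \ref{thm:appmain} in \S\ref{app}: the identity $\bigl(\begin{smallmatrix} a & 1 \\ 1 & 0\end{smallmatrix}\bigr)\bigl(\begin{smallmatrix} b & 1 \\ 1 & 0\end{smallmatrix}\bigr)=\bigl(\begin{smallmatrix} 1 & a \\ 0 & 1\end{smallmatrix}\bigr)\bigl(\begin{smallmatrix} 1 & 0 \\ b & 1\end{smallmatrix}\bigr)$ giving $\ell(\gamma)\le 2A\,\ell_A(\gamma)$, the Fibonacci lower bound $\gamma_{11}\ge F_{2k+1}$, the choice $c=\varphi^{1/A}$, and the prime-counting in a window via the corollary to Theorem \ref{thm:BK} all appear there, with only cosmetic differences in how the window $(\theta N, N]$ is parametrized. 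So your write-up would be a correct (and faithful) proof of Theorem \ref{thm:appmain}; it is simply not a proof of Theorem \ref{thm:BK}.
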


That is, almost all {\it integers} (not just primes) arise as top-left entries in the semigroup $\G_{A}$. 
Bourgain-Kontorovich give $A_0=50$ as an allowable value for $A$, and
this has since been reduced to $A_0=5$ \cite{FrolenkovKan2013, Huang2013}; furthermore, Hensley \cite{Hensley1996} has conjectured that $A_0=2$ is allowable, and that the error rate $O(e^{-\fc\sqrt{\log N}})$ can be replaced by $O(1/N)$.
What is most important to our application is that the error rate  is asymptotically $o(1/\log N)$. This, together with the Prime Number Theorem, %\thmref{thm:BK} 
has the following immediate
\begin{cor}
Let notation be as above and set $A=5$. Then for any fixed constant $\th<1$,
\begin{equation}\label{eq:cor}
\#\left\{
%\text{primes }
p\in(\th N, N]
\ : \
\exists\g\in\G_{A}\text{ with }\g_{11}=p
\right\}
\  = \
(1-\th)
{N\over \log N}
\bigg(1+o(1)\bigg)
,
\end{equation}
as $N\to\infty$.
\end{cor}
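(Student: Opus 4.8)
The plan is to read the corollary directly off Theorem \ref{thm:BK} together with the Prime Number Theorem, with no further number-theoretic input. Set $\mathcal{S} := \{\, n \ge 1 : \exists\,\g\in\G_{A}\ \text{with}\ \g_{11}=n \,\}$ and, for $N$ large, let $\mathcal{E}_N := \{\, n\le N : n\notin\mathcal{S}\,\}$ be the set of integers up to $N$ \emph{not} represented as a top-left entry. Since $A=5$ is an admissible value in Theorem \ref{thm:BK} (by the refinement $A_0=5$ of \cite{FrolenkovKan2013,Huang2013}), that theorem is precisely the assertion that
\[
\#\mathcal{E}_N \ = \ O\!\left(N\,e^{-\fc\sqrt{\log N}}\right)
\]
for an absolute constant $\fc>0$.

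Next I would split the desired count. For $p\in(\th N,N]$ one has $p\in\mathcal{S}$ if and only if $p\notin\mathcal{E}_N$, so the set in \eqref{eq:cor} is $\{\,p\in(\th N,N]\,\}\setminus\mathcal{E}_N$, whence
\[
\#\bigl\{\,p\in(\th N,N] : \exists\,\g\in\G_{A},\ \g_{11}=p\,\bigr\}
\ = \ \bigl(\pi(N)-\pi(\th N)\bigr)\ -\ \#\bigl(\{\,p\in(\th N,N]\,\}\cap\mathcal{E}_N\bigr).
\]
The subtracted term is bounded above by $\#\mathcal{E}_N = O\!\left(N e^{-\fc\sqrt{\log N}}\right)$, while the Prime Number Theorem gives $\pi(N)-\pi(\th N) = (1-\th)\tfrac{N}{\log N}(1+o(1))$ for fixed $\th<1$.

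The one point deserving a line of justification — and the reason, as the surrounding text emphasises, that a sieve-strength input would not suffice — is that this error term is negligible next to the main term, i.e. $N e^{-\fc\sqrt{\log N}} = o(N/\log N)$. This holds because $\log\!\bigl(\log N\cdot e^{-\fc\sqrt{\log N}}\bigr) = \log\log N - \fc\sqrt{\log N}\to-\infty$ as $N\to\infty$. Feeding this back into the displayed identity (and using that the subtracted term is nonnegative, which also yields the matching lower bound), the count equals $(1-\th)\tfrac{N}{\log N}(1+o(1))$, which is \eqref{eq:cor}.

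In short, I expect no genuine obstacle in the corollary itself: all the difficulty is quarantined in Theorem \ref{thm:BK}, whose strong error rate $O(e^{-\fc\sqrt{\log N}})$ — rather than a mere $o(1)$, or the positive-proportion saving one would extract from the Affine Sieve — is exactly what lets primes survive inside the thin orbit. Granting that theorem as a black box, the steps above are elementary.
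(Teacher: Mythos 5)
Your argument is correct and is exactly the one the paper has in mind: the paper explicitly flags that what matters is that the Bourgain--Kontorovich error term $O(e^{-\fc\sqrt{\log N}})$ is $o(1/\log N)$, so that the exceptional set cannot absorb a positive proportion of the primes in $(\th N, N]$, and then invokes PNT; your decomposition via $\mathcal{E}_N$ and the subtraction $(\pi(N)-\pi(\th N)) - \#(\{p\in(\th N,N]\}\cap\mathcal{E}_N)$ is precisely that reasoning made explicit.
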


Equipped with this estimate, is it a simple matter to give the

%\pf[
\subsection{Proof of \thmref{thm:appmain}}Fix constants $c>1$ and $\tau>0$ to
be chosen later, and let $\SC_{1}$ denote the set of primes on the
left hand side of \eqref{eq:main},
$$
\SC_{1} \ := \ 
\left\{
%\text{primes }
p>\tau c^{L}
\ : \
\exists\g\in\G\text{ with }\ell(\g)\le L\text{ and }\g_{11}=p
\right\}
.
$$
We seek a lower bound on the cardinality of $\SC_{1}$.

For a parameter $A$ (which we will soon set to $A=5$) and a matrix  $\g\in\G_{A}$, let $\ell_{A}(\g)$ denote the wordlength in the generators of $\G_{A}$ 
given in \eqref{eq:GAdef}.
We make the pleasant observation that 
$$
\mattwo a110\cdot\mattwo b110
\ =\ 
\mattwo 1a01\cdot\mattwo 10b1
,
$$
and hence $\G_{A}$ is a sub-semigroup of $\G$. Moreover, if $\g\in\G_{A}\subset\G$, then the wordlengths in the two semigroups are related by
$$
%\ell_{A}(\g) \ \le \ \frac{ \ell(\g)}{2A}
\ell(\g) \ \le \ 2A\cdot\ell_{A}(\g)
,
$$
since each generator in $\G_{A}$ has wordlength at most $2A$ in the generators of $\G$.
We decrease $\SC_{1}$ to a smaller set $\SC_{2}\subset \SC_{1}$ of primes coming from the top-left entries of $\G_{A}$ instead of $\G$:
$$
\SC_{2} \ := \ 
\left\{
%\text{primes }
p>\tau c^{L}
\ : \
\exists\g\in\G_{A}\text{ with }\ell_{A}(\g)\le L/(2A)\text{ and }\g_{11}=p
\right\}
.
$$

Next we define the archimedean sup-norm 
$$
\|\g\|_{\infty} \ := \ \max(\g_{ij}),
$$  
which for $\g\in\G_{A}$ is easily seen to be the top left entry 
\begin{equation}\label{eq:sup11}
\|\g\|_{\infty}\ = \ \g_{11}
.
\end{equation}
Let 
$$
\varphi \ := \ {1+\sqrt{5}\over 2} \qquad\text{and}\qquad \overline{\varphi} \ := \ {1-\sqrt{5}\over 2}
$$
denote the eigenvalues of $\mattwos 1110$. 

For any $$\g=\prod_{i=1}^n \left ( \mattwo {a_i}110 \mattwo{b_i}110
\right ) \in\Gamma_A,$$ we have
\begin{align*}
 \|\g\|_{\infty} &= \left(\begin{matrix}
                     1 & 0
                    \end{matrix}\right) \g \left(\begin{matrix}
1 \\
0
\end{matrix}\right)  \geq \left(\begin{matrix}
                     1 & 0
                    \end{matrix}\right) \mattwo 1110 ^{2n} \left(\begin{matrix}
1 \\
0
\end{matrix}\right)  =F_{2n+1}
\end{align*}
where $F_{m}$ is the $m$-th Fibonacci number. Because $F_{2n+1} =
(\varphi^{2n+1} - \overline{\varphi}^{2n+1})/\sqrt{5}$, if we set $d := \varphi
/ \sqrt{5}$, then for all $\g\in\G_{A}$,
$$
\|\g\|_{\infty} \ \geq \ d\cdot \varphi^{2\ell_{A}(\g)}
.
$$
That is, the logarithm of the archimedean norm is controlled (up to a constant) by the wordlength. 
Define the ``archimedean'' parameter $N$ (with respect to $L$)  %given 
by
\begin{equation}\label{eq:Nis}
N \ := \ d\cdot \varphi^{L/A}
.
\end{equation}
Replacing the wordlength condition $\ell_{A}(\g)\le L/(2A)$ in $\SC_{2}$ by the stronger restriction that $\|\g\|_{\infty}\le N$ decreases 
$\SC_{2}$ to a subset $\SC_{3}$ defined by
$$
\SC_{3} \ := \ 
\left\{
p>\tau c^{L}
\ : \
\exists\g\in\G_{A}\text{ with }\|\g\|_{\infty}\le N\text{ and }\g_{11}=p
\right\}
.
$$
%For $\g$ with
Since
 $\g_{11}=p=\|\g\|_{\infty}$, the condition $\|\g\|_{\infty}\le N$ can be replaced by $p\le N$;
 %. 
 %So
 hence
$$
\SC_{3} \ = \ 
\left\{
\tau c^{L}<p\le N
\ : \
\exists\g\in\G_{A}\text{ with }%\|\g\|_{\infty}\le N\text{ and }
\g_{11}=p
\right\}
.
$$
 
Make the choice 
\begin{equation}\label{eq:cIs}
c
\ = \ \varphi^{1/A},
\end{equation}
 which is 
$(\frac{1+\sqrt{5}}{2})^{1/5}\approx 1.101\ldots$ when $A=5$.
Then for any $\th<1$, take $\tau=\theta d$. With these choices of parameters, we see that
$$
\SC_{3} \ = \ 
\left\{
\th N < p \le N
\ : \
\exists\g\in\G_{A}\text{ with }\g_{11}=p
\right\}
.
$$
Now we are done: combining the above with \eqref{eq:Nis} and  \eqref{eq:cor}  gives
$$
\# \SC_{1}  \ \ge \   \# \SC_{3}  \ \gg \ \frac N{\log N} \ \gg \ {c^{L}\over L},
$$
as claimed in \eqref{eq:main}.
This completes the proof of \thmref{thm:appmain}. 
%\epf

\def\cprime{$'$} \def\cprime{$'$} \def\cprime{$'$}

\end{document}